\numberwithin{equation}{section}
\newtheorem{theorem}{Theorem}[section]
\newtheorem{lemma}[subsection]{{\bf Lemma}}
\newtheorem{coro}[subsection]{{\bf Corollary}}
\newtheorem{definition}[subsection]{Definition}
\begin{document}

	\title[ Infinite families of congruences for $2$ and $13$-core partitions ]{ Infinite families of congruences for $2$ and $13$-core partitions } 
	
	\author[ Ankita Jindal]{ Ankita Jindal}
	\address{Ankita Jindal, Indian Statistical Institute, 8th Mile, Mysore Road, RVCE Post, Bangalore, 560059.}
	\email{ankitajindal1203@gmail.com }
	
	\author[N.K. Meher]{N.K. Meher}
	\address{Nabin Kumar Meher, Department of Mathematics, Indian Institute of Information Technology, Raichur, Government of Engineering College, Yermarus Campus,Raichur, karnataka 584135, India.}
	\email{mehernabin@gmail.com, nabinmeher@iiitr.ac.in}
	
	\thanks{2010 Mathematics Subject Classification: Primary 11P83, Secondary 11F11 \\
		Keywords: $t$-core partitions; Eta-quotients; Congruence; modular forms.}
	\maketitle
	\pagenumbering{arabic}
	\pagestyle{headings}
	\begin{abstract}
		A partition of $n$ is called a $t$-core partition if none of its hook number is divisible by $t.$ In 2019, Hirschhorn and Sellers \cite{Hirs2019} obtained a parity result for $3$-core partition function $a_3(n)$. Motivated by this result, both the authors \cite{MeherJindal2022} recently proved that for a non-negative integer $\alpha,$ $a_{3^{\alpha} m}(n)$ is almost always divisible by arbitrary power of $2$ and $3$ and $a_{t}(n)$ is almost always divisible by arbitrary power of $p_i^j,$ where $j$  is a fixed positive integer and $t= p_1^{a_1}p_2^{a_2}\ldots p_m^{a_m}$ with primes $p_i \geq 5.$ In this article, by using Hecke eigenform theory, we obtain infinite families of congruences and multiplicative identities for $a_2(n)$ and $a_{13}(n)$ modulo $2$ which generalizes some results of Das \cite{Das2016}.
	\end{abstract}
	\maketitle
	
	\section{Introduction}
	A partition $\beta=(\beta_1,\beta_2, \cdots, \beta_{r})$ of $n$ is a non-increasing sequence of positive integers whose sum is $n$ and the positive integers $\beta_i$ are called parts of the partition $\beta$. A partition $\beta$ of $n$ can be represented by the Young diagram $[\beta]$ (also known as the Ferrers graph) which consists of the $s$ number of rows such that the $i^{th}$ row has $\beta_i$ number of dots $\bullet$ and all the rows start in the first column. An illustration of the Young diagram for $\beta=(\beta_1,\beta_2, \cdots, \beta_{r})$ is as follows.
	\begin{center}
		$[\beta]$:=\begin{tabular}{lcll}
			$\bullet$ & $\bullet$& $\cdots$$\cdots$$\cdots$\hspace{0.3cm}$\bullet$ & $\beta_1$ dots \\
			$\bullet$ & $\bullet$& $\cdots$$\cdots$\hspace{0.3cm}$\bullet$ & $\beta_2$ dots \\
			& $\vdots$& & \vdots \\
			$\bullet$ &$\bullet$ & $\cdots$\hspace{0.3cm}$\bullet$ & $\beta_r$ dots
		\end{tabular}
	\end{center}
	For $1\leq i \leqslant r$ and $1\leq j \leq \beta_i$, the dot of $[\beta]$ which lies in the $i^{th}$ row and $j^{th}$ column is denoted by $(i,j)^{th}$-dot of $\beta$. Let $\beta_j^{'}$ denote the number of dots in $j^{th}$ column. The hook number $H_{i,j}$ of $(i,j)^{th}$-dot is defined by $\beta_i+\beta_j^{'}-i-j+1$. In other words, $H_{i,j}=1+h_0$ where $h_0$ is the sum of the number of dots lying right to the $(i,j)^{th}$-dot in the $i^{th}$ row, the number of dots lying below the $(i,j)^{th}$-dot in the $j^{th}$ column. Given a partition $\beta$ of $n$, we say that it is a $t$-core partition if none of its hook number is divisible by $t$.

	\noindent\textbf{Example 1.} The Young diagram of the partition $\beta=(6,3,1)$ of $10$ is
	\begin{center}
		\begin{tabular}{ccccc}
			$\bullet^{8}$ & $\bullet^{6}$& $\bullet^{5}$ & $\bullet^{3}$ & $\bullet^{2}$ $\bullet^{1}$\\
			$\bullet^{4}$ & $\bullet^{2}$ & $\bullet^{1}$ & &\\
			$\bullet^{1}$ & & & &
		\end{tabular}
	\end{center}
	where the superscript on each dot represents its hook number. It can be easily observed that this is a $t$-core partition of $10$ for $t\in\{7\}$ and $t \geqslant 9$.
	
	\noindent\textbf{Example 2.} There are no $3$-core partitions of $7$. This can be easily verified by looking at the Young diagram of each partition of $7$.
	
	For a positive integer $n$, let $a_t(n)$ denote the number of $t$-core partitions of $n.$ Its generating function is given by
	\begin{equation}\label{eq1}
		\sum_{n=0}^{\infty} a_t(n) q^n = \prod \limits_{n=1}^{\infty}\frac{(1-q^{tn})^t}{(1-q^n)} = \frac{(q^t;q^t)^t_{\infty}}{(q;q)_{\infty}},
	\end{equation}
	where $(a;q)_{\infty}=(1-a)(1-aq)(1-aq^2)\cdots$.
	
	In \cite[Corollary 1]{Garvan1990}, Garvan, Kim, Stanton obtained the congruence
	\begin{align}\label{eq5711}
		a_p(p^jn-\delta_{p})\equiv 0\pmod {p^j}
	\end{align}
	where $p \in \{5,7,11\}$, $n$, $j$ are positive integers and $\delta_{p}= \frac{p^2-1}{24}$. In \cite[Proposition 3]{Graville1996}, Granville and Ono proved similar congruences, namely
	\begin{align*}
		a_{5^j}(5^jn-\delta_{5,j})&\equiv 0\pmod {5^j},\\
		a_{7^j}(7^jn-\delta_{7,j})&\equiv 0\pmod {7^{\lfloor \frac j2 \rfloor +1}},\\
		a_{11^j}(11^jn-\delta_{11,j})&\equiv 0\pmod {11^j}
	\end{align*}
	where $n$, $j$ are positive integers and $\delta_{p,j}\equiv \frac{1}{24}\pmod{p^j}$ for $p \in \{5,7,11\}$.

	In 2019, Hirschhorn and Sellers \cite{Hirs2019} proved a parity result for $a_3(n)$, i.e. for all $n \geq 0$,
	\begin{align*}
		a_3 (n)=\begin{cases}
			1\pmod 2	& \textrm{if }n= 3r^2+2r \textrm{ for some integer } r,\\
			0 \pmod 2	& \textrm{otherwise.}
		\end{cases}
	\end{align*}
 Motivated by this result, both the authors proved that for a non-negative integer $\alpha,$ $a_{3^{\alpha} m}(n)$ is almost always divisible by arbitrary power of $2$ and $3.$ Moreover, they also proved that $a_{t}(n)$ is almost always divisible by arbitrary power of $p_i^j,$ where $j$  is a fixed positive integer and $t= p_1^{a_1}p_2^{a_2}\ldots p_m^{a_m}$ with primes $p_i \geq 5.$ In this following theorem, we obtain infinite families of congruences modulo $2$ for $a_2(n)$ and $a_{13}(n)$ by using Hecke eigen form theory.
		\begin{theorem}\label{mainthm3}
		Let $k$ and $n$ be non-negative integers. For each $1 \leq i \leq k+1,$ let $p_1, p_2,\ldots,p_{k+1}$ be prime numbers such that $p_i \geq 5$. Then for any integer $j \not \equiv 0 \pmod {p_{k+1}},$ we have
		\begin{itemize}
			\item[(i)] $a_2 \left(  p_1^2 p_2^2 \cdots p_{k+1}^2 n + \frac{p_1^2 p_2^2 \cdots p_k^2p_{k+1}\left(8j+ p_{k+1}\right) -1}{8} \right) \equiv 0 \pmod2.$
			\item[(ii)] $a_{13} \left( 104 p_1^2 p_2^2 \cdots p_{k+1}^2 n + 13p_1^2 p_2^2 \cdots p_k^2p_{k+1}\left(\epsilon_pj+ p_{k+1}\right) -7 \right) \equiv 0 \pmod2$
			where
			\begin{align*}
				\epsilon_p=\begin{cases}
					1 & \textrm { if } p \not \equiv 1 \pmod 8,\\
					8 & \textrm { if } p \equiv 1 \pmod 8.
				\end{cases}
			\end{align*}
		\end{itemize}
	\end{theorem}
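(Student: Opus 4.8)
The plan is to encode the parity of $a_2(n)$ (resp.\ $a_{13}(n)$) in the Fourier coefficients of a weight-one Hecke eigenform and then exploit multiplicativity. \emph{Step 1 (reduction mod $2$ to an eta-quotient).} Starting from \eqref{eq1} and using $(1-q^m)^2\equiv 1-q^{2m}\pmod 2$ together with Jacobi's identity $(q;q)_\infty^3=\sum_{m\ge 0}(-1)^m(2m+1)q^{m(m+1)/2}$, one gets $\sum_{n\ge 0}a_2(n)q^n\equiv (q;q)_\infty(q^2;q^2)_\infty\equiv(q;q)_\infty^3\pmod 2$; after the substitution $q\mapsto q^8$ and multiplication by $q$ this reads
\[
\sum_{n\ge 0}a_2(n)\,q^{8n+1}\ \equiv\ q\,(q^{8};q^{8})_\infty\,(q^{16};q^{16})_\infty\ =\ \eta(8z)\,\eta(16z)\ \pmod 2 .
\]
The same kind of manipulation applied to $(q^{13};q^{13})_\infty^{13}/(q;q)_\infty$ (split off the even part of $(q^{13};q^{13})_\infty^{13}$ with $(1-q^m)^2\equiv 1-q^{2m}$, and rewrite $(q;q)_\infty^{-1}$ as above), followed by a substitution $q\mapsto q^{104}$, yields $\sum_{n\ge 0}a_{13}(n)\,q^{104n+\gamma}\equiv\mathcal H(z)\pmod 2$ for an explicit integer $\gamma$ (governed by the shift $\delta_{13}=(13^2-1)/24=7$) and an explicit weight-one eta-quotient $\mathcal H$. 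One then verifies, via Ligozat's criteria and the theory of CM forms, that $\eta(8z)\eta(16z)$ and $\mathcal H$ are normalized Hecke eigenforms; write $\eta(8z)\eta(16z)=\sum_{m\ge 1}c(m)q^m$, so $c(m)=0$ unless $m\equiv 1\pmod 8$ and $c(8n+1)\equiv a_2(n)\pmod 2$, and $\mathcal H=\sum_{m}c'(m)q^m$ with $c'(104n+\gamma)\equiv a_{13}(n)\pmod 2$.

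\emph{Step 2 (the arithmetic input).} For every odd prime $p$ one has $c(p)\equiv 0\pmod 2$: either $p\not\equiv 1\pmod 8$, so $c(p)=0$ by the support condition, or $p\equiv 1\pmod 8$, so $c(p)\equiv a_2\!\big(\tfrac{p-1}{8}\big)\pmod 2$ --- and $a_2(m)$ is odd exactly when $8m+1$ is a perfect square, which $8m+1=p$ is not, being prime. The identical argument, using the explicit small values of $a_{13}$, gives $c'(p)\equiv 0\pmod 2$ for all odd $p$. Feeding $c(p)\equiv 0$ into the Hecke recursion, which modulo $2$ reads $c(p^{r+1})\equiv c(p)\,c(p^{r})+c(p^{r-1})\pmod 2$ at a prime $p\ge 5$, gives $c(p^{r})\equiv 0\pmod 2$ for all odd $r$; combined with the multiplicativity $c(m_1m_2)=c(m_1)c(m_2)$ for $\gcd(m_1,m_2)=1$, this produces the key dichotomy: \emph{if $v_p(m)$ is odd for some odd prime $p$, then $c(m)\equiv 0\pmod 2$} (and likewise for $c'$).

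\emph{Step 3 (locating the index).} For (i), if $N$ denotes the argument of $a_2$, a direct computation gives
\[
8N+1 \;=\; p_1^{2}p_2^{2}\cdots p_k^{2}\cdot p_{k+1}\cdot\bigl(p_{k+1}(8n+1)+8j\bigr),
\]
and since $p_{k+1}\ge 5$ and $p_{k+1}\nmid j$, we get $p_{k+1}\nmid\bigl(p_{k+1}(8n+1)+8j\bigr)$, so $v_{p_{k+1}}(8N+1)$ is odd; hence $a_2(N)\equiv c(8N+1)\equiv 0\pmod 2$. Part (ii) runs in parallel: $104N+\gamma$ factors as a fixed constant times $p_1^{2}\cdots p_k^{2}\cdot p_{k+1}\cdot\bigl(p_{k+1}(8n+1)+\epsilon_p j\bigr)$, and because $\epsilon_p\in\{1,8\}$, $p_{k+1}\ge 5$, and $p_{k+1}\nmid j$, the prime $p_{k+1}$ again occurs to an odd power, so $a_{13}(N)\equiv c'(104N+\gamma)\equiv 0\pmod 2$; the two values of $\epsilon_p$ are precisely what the normalization of Step 1 forces (they record a congruence modulo $8$, which is why $p\equiv 1\pmod 8$ appears). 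The step I expect to be genuinely delicate is Step 1 for $t=13$: pinning down the correct $\gamma$ and eta-quotient $\mathcal H$ and checking that $\mathcal H$ is (the mod-$2$ reduction of) a Hecke eigenform. For $t=2$, Step 1 degenerates into the classical fact that $a_2(n)$ is odd iff $8n+1$ is a perfect square, so Steps 2--3 there are essentially automatic, and part (ii) is where the eigenform machinery really does the work.
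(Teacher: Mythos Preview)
Your treatment of part (i) is correct and in fact slightly slicker than the paper's. Both arguments encode $a_2(n)\bmod 2$ as the coefficient $b(8n+1)$ of the weight-one newform $\eta(8z)\eta(16z)$, but the paper then splits into two cases --- using the Hecke relation with $b(p)=0$ when $p\not\equiv 1\pmod 8$, and switching to Jacobi's identity for $(q;q)_\infty^3$ when $p\equiv 1\pmod 8$ --- and builds the general statement by iterated substitution. Your single observation that $b(p)\equiv 0\pmod 2$ for \emph{every} prime $p\ge 5$ (either $p\not\equiv 1\pmod 8$ and $b(p)=0$ outright, or $p\equiv 1\pmod 8$ and $b(p)\equiv a_2\bigl(\tfrac{p-1}{8}\bigr)$ is even because $p$ is not a square) feeds the Hecke recursion uniformly and lets multiplicativity finish in one stroke. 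Your factorisation $8N+1=p_1^2\cdots p_k^2\,p_{k+1}\bigl(p_{k+1}(8n+1)+8j\bigr)$ is exactly right.

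The genuine gap is your Step~1 for $t=13$, and it is not merely delicate: the direct manipulation you propose cannot produce a Hecke eigenform. Reducing $(q^{13};q^{13})_\infty^{13}/(q;q)_\infty$ modulo $2$ still leaves a factor $(q;q)_\infty^{-1}$ (equivalently $(q;q)_\infty/(q^2;q^2)_\infty$, which you can keep unwinding but never terminate), so there is no eta-quotient $\mathcal H$ encoding \emph{all} of $a_{13}(n)\bmod 2$ on a single arithmetic progression; consequently your appeal to ``explicit small values of $a_{13}$'' for $c'(p)\equiv 0$ has nothing to stand on. What the paper uses instead is a result of Das extracting one residue class:
\[
\sum_{n\ge 0} a_{13}(104n+6)\,q^n \;\equiv\; (q;q)_\infty(q^2;q^2)_\infty \pmod 2,
\]
whose right-hand side is precisely $\sum_n a_2(n)q^n\bmod 2$. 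Hence $a_{13}(104n+6)\equiv a_2(n)\pmod 2$, and part (ii) reduces to part (i) under the index shift $n\mapsto 104n+6$ (note $104\cdot\tfrac{p^2-1}{8}+6=13p^2-7$). The $\epsilon_p$ in (ii) is then not a ``normalisation'' artefact but a trace of the paper's two-case proof of the base congruence.
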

	
 	\begin{coro}\label{coro3}
		Let $n$ and $k$ be non-negative integers. For a prime $p \geq 5$ and an integer $j \not \equiv 0 \pmod {p}$, we have
		\begin{itemize}
			\item[(i)]$	a_{2}\left( p^{2(k+1)}n + p^{2k+1}j+ \frac{p^{2k+2}-1}{8} \right) \equiv 0 \pmod 2.$
			\item[(ii)] $a_{13}\left(104p^{2k+2}n +13\epsilon_pp^{2k+1} j+13p^{2k+2}-7\right)\equiv 0 \pmod 2.$
		\end{itemize}	
	\end{coro}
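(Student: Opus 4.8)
Corollary \ref{coro3} is simply the special case $p_1=p_2=\cdots=p_{k+1}=p$ of Theorem \ref{mainthm3}, and nothing beyond the theorem is needed. Indeed, under this substitution $p_1^2p_2^2\cdots p_{k+1}^2=p^{2k+2}$ and $p_1^2\cdots p_k^2\,p_{k+1}(8j+p_{k+1})=p^{2k+1}(8j+p)=8p^{2k+1}j+p^{2k+2}$, so the index in part (i) of the theorem becomes $p^{2k+2}n+p^{2k+1}j+\frac{p^{2k+2}-1}{8}$, which is exactly the index of Corollary \ref{coro3}(i); part (ii) is identical after the same substitution, since $2(k+1)=2k+2$ and $13p_1^2\cdots p_k^2\,p_{k+1}(\epsilon_pj+p_{k+1})=13p^{2k+1}(\epsilon_pj+p)=13\epsilon_pp^{2k+1}j+13p^{2k+2}$. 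One only checks that Theorem \ref{mainthm3} does not require $p_1,\dots,p_{k+1}$ to be distinct (it does not) and that the index is a non-negative integer ($p^{2k+2}\equiv1\pmod 8$ handles integrality, and for $j<0$ the claim is vacuous since $a_t$ of a negative argument is $0$).

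Because the corollary collapses to the theorem, I describe where the work in the theorem lies. For part (i) one reduces modulo $2$: from $(q^2;q^2)_\infty\equiv(q;q)_\infty^2\pmod 2$ one gets $\sum_{n\ge0}a_2(n)q^n=\frac{(q^2;q^2)_\infty^2}{(q;q)_\infty}\equiv(q;q)_\infty^3\pmod 2$, and Jacobi's identity $(q;q)_\infty^3=\sum_{m\ge0}(-1)^m(2m+1)q^{m(m+1)/2}$ shows that $a_2(n)$ is odd exactly when $8n+1$ is a perfect square. The eigenform-theoretic way to package this is that $\eta(24z)^3=\sum_{m\ge1,\ 2\nmid m}\left(\tfrac{-4}{m}\right)m\,q^{3m^2}$ is a lacunary weight-$\tfrac32$ Hecke eigenform, that $a_2(n)$ equals its $q^{24n+3}$-coefficient modulo $2$, and that this coefficient is $\equiv0\pmod 2$ whenever its index has odd $p$-adic valuation for some prime $p\ge5$. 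It then remains to compute $24\bigl(p_1^2\cdots p_{k+1}^2n+\tfrac{p_1^2\cdots p_k^2 p_{k+1}(8j+p_{k+1})-1}{8}\bigr)+3=3\,p_1^2\cdots p_k^2\cdot p_{k+1}\cdot\bigl(p_{k+1}(8n+1)+8j\bigr)$ and to observe that its $p_{k+1}$-adic valuation is $(\text{even})+1+0$, hence odd — the $0$ because $p_{k+1}\ge5$ and $p_{k+1}\nmid j$ force $p_{k+1}\nmid p_{k+1}(8n+1)+8j$. So $a_2$ of the prescribed progression is even.

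Part (ii) follows the same template with $\frac{\eta(13z)^{13}}{\eta(z)}=q^{7}\sum_{n\ge0}a_{13}(n)q^n$ (a weight-$6$ cusp form on $\Gamma_0(13)$ with the quadratic character modulo $13$) in place of $\eta(24z)^3$: one reduces it modulo $2$ to the reduction of a lacunary Hecke eigenform $g=\sum_mc(m)q^m$ of CM type attached to an imaginary quadratic field ramified at $2$ and $13$, identifies $a_{13}(n)$ modulo $2$ with a coefficient of $g$ along a fixed arithmetic progression (the source of the modulus $104=8\cdot13$ and the shift $-7$), and checks that the index $104\,p_1^2\cdots p_{k+1}^2n+13\,p_1^2\cdots p_k^2 p_{k+1}(\epsilon_pj+p_{k+1})-7$ is always carried to a coefficient-index forbidden by the support of $g$; the split between $p_{k+1}\equiv1\pmod 8$ and $p_{k+1}\not\equiv1\pmod 8$, recorded by $\epsilon_{p_{k+1}}\in\{8,1\}$, is precisely the case distinction needed to make this hold (it reflects the behaviour of $2$ and of $p_{k+1}$ in the CM field). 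The real obstacle is this identification for $a_{13}$: carrying out the eta-quotient manipulation that collapses $\frac{\eta(13z)^{13}}{\eta(z)}$ modulo $2$ to a lacunary eigenform, describing the quadratic forms and congruence conditions that cut out its support, and verifying that the relevant Hecke eigenvalues are $\equiv0\pmod 2$ for every prime $p\ge5$. After that, the congruences — and with them the corollary — reduce to the elementary valuation computations above.
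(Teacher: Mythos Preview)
Your first paragraph is correct and is exactly how the paper derives the corollary: it is the specialization $p_1=\cdots=p_{k+1}=p$ of Theorem~\ref{mainthm3}, and the arithmetic you display matches. Nothing more is required.

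The remaining two paragraphs are not part of proving the corollary; they sketch a proof of Theorem~\ref{mainthm3} itself, and that sketch diverges from the paper's actual argument. The paper does not work with the weight-$\tfrac32$ form $\eta(24z)^3$ or with $\eta(13z)^{13}/\eta(z)$ as a CM form. Instead it uses the weight-$1$ Hecke eigenform $\eta(8z)\eta(16z)\in S_1\bigl(\Gamma_0(128),(\tfrac{-2}{\cdot})\bigr)$ together with the elementary identity for $(q;q)_\infty^3$, splitting into the cases $p\not\equiv1\pmod 8$ and $p\equiv1\pmod 8$; and for $a_{13}$ it quotes Das's congruence $\sum_{n\ge0}a_{13}(104n+6)q^n\equiv(q;q)_\infty(q^2;q^2)_\infty\pmod 2$, which reduces the $a_{13}$ problem to the $a_2$ problem in one step. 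Your valuation heuristic for part~(i) is essentially equivalent to the paper's, but your description of part~(ii) is speculative where the paper's is concrete and short.
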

	
	Furthermore, we prove the following multiplicative formulae for $2$-core partitions and $13$-core partitions modulo $2$.
	\begin{theorem}\label{mainthm4}
		Let $k$ be a positive integer and $p$ be a prime number such that $ p \equiv 7 \pmod 8.$ Let $r$ be a non-negative integer such that $p$ divides $8r+7,$ then
		\begin{itemize}
			\item[(i)] $a_{2}\left(p^{k+1}n+pr+ \frac{7p-1}{8}\right)\equiv  (-1)  \left(\frac{-2}{p}\right) a_{2}\left(p^{k-1}n+ \frac{8r+7-p}{8p}\right) \pmod2.$
			\item[(ii)] $a_{13}\left(104 p^{k+1}n+104pr+ 91p-7\right)\equiv  (-1) \left(\frac{-2}{p}\right) a_{13}\left(104p^{k-1}n+ \frac{104r+91}{p}-7\right) \pmod2.$
		\end{itemize}
	\end{theorem}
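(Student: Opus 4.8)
The plan is to realise each generating function, after an appropriate rescaling and shift, as an eta-quotient whose reduction modulo $2$ is controlled by a Hecke eigenform, and then to extract the multiplicative identities from the action of the relevant Hecke operator on Fourier coefficients. For $a_2$, replacing $q$ by $q^{8}$ in \eqref{eq1} and multiplying by $q$ gives $\sum_{n\ge 0}a_2(n)q^{8n+1}=\eta(16z)^{2}/\eta(8z)$. Since $\eta(z)^{2}\equiv\eta(2z)\pmod 2$ we have $\eta(16z)^{2}\equiv\eta(8z)^{4}\pmod 2$, so $\sum_{n\ge 0}a_2(n)q^{8n+1}\equiv\eta(8z)^{3}\pmod 2$; by Jacobi's identity $(q;q)_\infty^{3}=\sum_{k\ge 0}(-1)^{k}(2k+1)q^{k(k+1)/2}$ one gets $\eta(8z)^{3}=\sum_{k\ge 0}(-1)^{k}(2k+1)q^{(2k+1)^{2}}$, so modulo $2$ the coefficient of $q^{8n+1}$ is $1$ exactly when $8n+1$ is an (odd) perfect square. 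For part (i), write $8r+7=p\ell$; a direct check gives
\[
8\bigl(p^{k+1}n+\tfrac{7p-1}{8}+pr\bigr)+1=p^{2}\bigl(8p^{k-1}n+\ell\bigr),\qquad 8\bigl(p^{k-1}n+\tfrac{8r+7-p}{8p}\bigr)+1=8p^{k-1}n+\ell,
\]
and the hypotheses $p\equiv 7\pmod 8$, $p\mid 8r+7$ make $\tfrac{8r+7-p}{8p}$ a nonnegative integer. Since $p^{2}M$ is a perfect square iff $M$ is, the two values of $a_2$ agree; and as $\bigl(\tfrac{-1}{p}\bigr)=-1$, $\bigl(\tfrac{2}{p}\bigr)=1$ for $p\equiv 7\pmod 8$, we have $\bigl(\tfrac{-2}{p}\bigr)=-1$, so $(-1)\bigl(\tfrac{-2}{p}\bigr)=1$ and (i) follows. (Alternatively: $\eta(8z)^{3}$ is a weight-$3/2$ Hecke eigenform whose $T_{p^{2}}$-eigenvalue $\bigl(\tfrac{-1}{p}\bigr)(p+1)$ is even, and an even eigenvalue forces $b(p^{2}m)\equiv b(m)\pmod 2$ for every coefficient.)

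For $a_{13}$ I would work with the weight-$6$ form $\frac{\eta(13z)^{13}}{\eta(z)}=\sum_{n\ge 0}a_{13}(n)q^{n+7}$ on $\Gamma_{0}(13)$ with nebentypus $\bigl(\tfrac{\cdot}{13}\bigr)$. Rescaling by $8$ and reducing modulo $2$ (again via $\eta(z)^{2}\equiv\eta(2z)$) one aims to obtain a congruence $\sum_{n\ge 0}a_{13}(n)q^{104n+728}\equiv g(z)\pmod 2$ with $g$ an explicit eta-quotient that is, modulo $2$, a Hecke eigenform (of CM / theta type); this is the source of the factor $104=8\cdot 13$ and of the constants $91p-7$ and $\tfrac{104r+91}{p}-7$. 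Writing $c(m)$ for the coefficient with $c(m)=a_{13}(m-7)$ and putting $K=8p^{k-1}n+\ell$ with $\ell=(8r+7)/p$ (an integer by hypothesis), one computes $104p^{k+1}n+104pr+91p-7=13p^{2}K-7$ and $104p^{k-1}n+\tfrac{104r+91}{p}-7=13K-7$, so (ii) is equivalent to $c(13p^{2}K)\equiv(-1)\bigl(\tfrac{-2}{p}\bigr)c(13K)\pmod 2$. Applying the weight-$6$ Hecke recursion $c(pm)=\lambda_{p}c(m)-\bigl(\tfrac{p}{13}\bigr)p^{5}c(m/p)$ for the eigenform $g$ twice (legitimate since $p\neq 13$) and reducing modulo $2$ — where $p^{5}\equiv 1$ and iterating the recursion shows that an even eigenvalue $\lambda_{p}$ forces $c(p^{2}m)\equiv c(m)\pmod 2$ for every $m$ — the claim reduces to showing that $\lambda_{p}$ is even whenever $p\equiv 7\pmod 8$. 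Then $(-1)\bigl(\tfrac{-2}{p}\bigr)=1$ matches the resulting identity $c(13p^{2}K)\equiv c(13K)\pmod 2$.

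The main obstacle is precisely this $a_{13}$ input: producing the correct eta-quotient $g$ with $\sum_{n\ge 0}a_{13}(n)q^{104n+728}\equiv g\pmod 2$, showing $g$ is governed modulo $2$ by a Hecke eigenform, and establishing the $2$-adic vanishing $\lambda_{p}\equiv 0\pmod 2$ for $p\equiv 7\pmod 8$ — naturally done by identifying $g$ modulo $2$ with a CM form attached to $\Q(\sqrt{-2})$ (whose $p$-th coefficient vanishes exactly when $p$ is inert in $\Q(\sqrt{-2})$, i.e. when $\bigl(\tfrac{-2}{p}\bigr)=-1$, in particular for all $p\equiv 7\pmod 8$), which is what forces the symbol $\bigl(\tfrac{-2}{p}\bigr)$ into the statement. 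The remaining work is the routine but careful bookkeeping that the collapsed two-term Hecke relation reproduces exactly the arithmetic progressions in (i) and (ii) and that the boundary case $p^{2}\mid 8r+7$ causes no trouble. By comparison the $a_2$ part is soft, since there $a_2(n)$ is literally the indicator of the triangular numbers and (i) reduces to the observation that $p^{2}M$ is a perfect square iff $M$ is.
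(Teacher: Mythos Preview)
Your argument for part~(i) is correct and in fact more elementary than the paper's. You observe that modulo~$2$ the generating function is $\eta(8z)^{3}$, whose coefficients detect odd squares, and then reduce (i) to the tautology that $p^{2}M$ is a square iff $M$ is. The paper instead works with the weight-$1$ eigenform $\eta(8z)\eta(16z)\in S_{1}\bigl(\Gamma_{0}(128),\bigl(\tfrac{-2}{\cdot}\bigr)\bigr)$: since $p\not\equiv 1\pmod 8$ gives $b(p)=0$, the eigenvalue $\lambda(p)$ vanishes and the two-term Hecke relation $b(pn)=-\bigl(\tfrac{-2}{p}\bigr)b(n/p)$ is exact (not just modulo~$2$). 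After the same index substitution $n\mapsto 8(p^{k}n+r)+7$ both proofs finish identically. Your route avoids Hecke theory altogether for~(i); the paper's route has the advantage that it is uniform with~(ii).

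Part~(ii), however, has a genuine gap. You propose to work directly with the weight-$6$ form $\eta(13z)^{13}/\eta(z)$, but this form is not a Hecke eigenform, and you do not produce the promised eta-quotient~$g$, nor verify that $\lambda_{p}\equiv 0\pmod 2$ for $p\equiv 7\pmod 8$. Trying to control $a_{13}(n)$ modulo~$2$ for \emph{all}~$n$ simultaneously via $\sum_{n}a_{13}(n)q^{104n+728}$ is the wrong granularity: one must first isolate the progression $n\equiv 6\pmod{104}$. The paper does exactly this by quoting Das's identity
\[
\sum_{n\ge 0}a_{13}(104n+6)\,q^{n}\;\equiv\;(q;q)_{\infty}(q^{2};q^{2})_{\infty}\pmod 2,
\]
which, combined with $(q^{2};q^{2})_{\infty}^{2}/(q;q)_{\infty}\equiv(q;q)_{\infty}(q^{2};q^{2})_{\infty}\pmod 2$, yields $a_{13}(104n+6)\equiv a_{2}(n)\pmod 2$. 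Once you have this, (ii) is \emph{literally} (i): your own computation $104p^{k+1}n+104pr+91p-7=104\bigl(p^{k+1}n+pr+\tfrac{7p-1}{8}\bigr)+6$ and $104p^{k-1}n+\tfrac{104r+91}{p}-7=104\bigl(p^{k-1}n+\tfrac{8r+7-p}{8p}\bigr)+6$ shows both sides of~(ii) are $a_{13}(104m+6)$ with the values of~$m$ appearing in~(i). So the missing idea is not a CM identification of a weight-$6$ form, but the single input $a_{13}(104n+6)\equiv a_{2}(n)\pmod 2$, after which your own elementary proof of~(i) finishes~(ii) as well.
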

	\begin{coro}\label{coro4}
		Let $k$ be a positive integer and $p$ be a prime number such that $p \equiv 7 \pmod 8.$ Then
		\begin{itemize}
			\item[(i)] $a_{2}\left( p^{2k}n+ \frac{p^{2k}-1}{8}\right) \equiv \left(-1\right)^k \left(\frac{-2}{p}\right)^k a_{2}(n) \pmod2.$
			\item [(ii)] $a_{13}\left(104 p^{2k}n+ 13p^{2k}-7\right) \equiv \left(-1\right)^k \left(\frac{-2}{p}\right)^k a_{13}(104n+6) \pmod2.$
		\end{itemize}
		
	\end{coro}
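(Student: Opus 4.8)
The plan is to deduce both parts of Corollary~\ref{coro4} from Theorem~\ref{mainthm4} by induction on $k$, at each stage feeding Theorem~\ref{mainthm4} a value of $r$ chosen so that the two shifted arguments appearing on its right-hand side telescope down to the previous case. Throughout I would use the elementary fact that, since $p\equiv 7\pmod 8$ and $7^2\equiv 1\pmod 8$, one has $p^{2m+1}\equiv 7^{2m+1}\equiv 7\pmod 8$ for every $m\ge 0$, so that $(p^{2m+1}-7)/8$ is a non-negative integer.

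For part~(i): the case $k=0$ is the tautology $a_2(n)\equiv a_2(n)\pmod 2$. Assuming the claim for some $k\ge 0$, I would apply Theorem~\ref{mainthm4}(i) with the integer called $k$ there replaced by $2k+1$ (so $p^{k+1}$ becomes $p^{2k+2}$ and $p^{k-1}$ becomes $p^{2k}$) and with $r=(p^{2k+1}-7)/8$. By the remark above this $r$ is a non-negative integer, and $8r+7=p^{2k+1}$ is divisible by $p$, so the hypotheses are satisfied. A short computation gives
\[
pr+\frac{7p-1}{8}=\frac{p^{2k+2}-1}{8},\qquad \frac{8r+7-p}{8p}=\frac{p^{2k}-1}{8},
\]
so Theorem~\ref{mainthm4}(i) yields $a_2\!\left(p^{2k+2}n+\frac{p^{2k+2}-1}{8}\right)\equiv(-1)\left(\frac{-2}{p}\right)a_2\!\left(p^{2k}n+\frac{p^{2k}-1}{8}\right)\pmod 2$, and the induction hypothesis applied to the right-hand side gives the claim for $k+1$.

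For part~(ii) the structure is identical, using Theorem~\ref{mainthm4}(ii): the case $k=0$ is $a_{13}(104n+13-7)=a_{13}(104n+6)$, and in the inductive step I would take the same $r=(p^{2k+1}-7)/8$ (again a non-negative integer with $p\mid 8r+7$), checking that $104pr+91p=13p^{2k+2}$ and $(104r+91)/p-7=13p^{2k}-7$. Theorem~\ref{mainthm4}(ii) then gives $a_{13}\!\left(104p^{2k+2}n+13p^{2k+2}-7\right)\equiv(-1)\left(\frac{-2}{p}\right)a_{13}\!\left(104p^{2k}n+13p^{2k}-7\right)\pmod 2$, and induction closes the loop.

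The only point requiring care is the elementary bookkeeping that makes Theorem~\ref{mainthm4} applicable at each stage: confirming that the chosen $r$ is a genuine non-negative integer --- which is exactly where the congruence $p\equiv 7\pmod 8$ enters --- and that $p\mid 8r+7$. Once that is in place the two displayed identities are routine, and no genuinely hard step arises; this is why the statement is phrased as a corollary rather than a theorem.
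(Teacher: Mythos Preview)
Your proposal is correct and follows essentially the same route as the paper: the paper also chooses $r$ with $8r+7=p^{2k-1}$ and substitutes $k\mapsto 2k-1$ into Theorem~\ref{mainthm4}, then iterates (written there as a chain $\equiv\cdots\equiv$) down to $a_2(n)$ and $a_{13}(104n+6)$. The only difference is cosmetic---you phrase the iteration as an explicit induction and are more careful in verifying that $r=(p^{2k+1}-7)/8$ is a non-negative integer, a point the paper passes over in silence.
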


	
	\section{Preliminaries}
	We recall some basic facts and definition on modular forms. For more details, we refer to \cite{Koblitz, Ono2004}. We start with some matrix groups. We define
	\begin{align*}
		\Gamma:=\mathrm{SL_2}(\mathbb{Z})= &\left\{ \begin{bmatrix}
			a && b \\c && d
		\end{bmatrix}: a, b, c, d \in \mathbb{Z}, ad-bc=1 \right\},\\
		\Gamma_{\infty}:= &\left\{\begin{bmatrix}
			1 &n\\ 0&1	\end{bmatrix}: n \in \mathbb{Z}\right\}.
	\end{align*}
	For a positive integer $N$, we define
	\begin{align*}
		\Gamma_{0}(N):=& \left\{ \begin{bmatrix}
			a && b \\c && d
		\end{bmatrix} \in \mathrm{SL_2}(\mathbb{Z}) : c\equiv0 \pmod N \right\},\\
		\Gamma_{1}(N):=& \left\{ \begin{bmatrix}
			a && b \\c && d
		\end{bmatrix} \in \Gamma_{0}(N) : a\equiv d  \equiv 1 \pmod N \right\}
	\end{align*}
	and 
	\begin{align*}
		\Gamma(N):= \left\{ \begin{bmatrix}
			a && b \\c && d
		\end{bmatrix} \in \mathrm{SL_2}(\mathbb{Z}) : a\equiv d  \equiv 1 \pmod N,\  b \equiv c  \equiv 0 \pmod N \right\}.
	\end{align*}
	A subgroup $\Gamma$ of $\mathrm{SL_2}(\mathbb{Z})$ is called a congruence subgroup if it contains $ \Gamma(N) $ for some positive integer $N$ and the smallest $N$ with this property is called its level. Note that $ \Gamma_{0}(N)$ and $ \Gamma_{1}(N)$ are congruence subgroups of level $N,$ whereas $ \mathrm{SL_2}(\mathbb{Z}) $ and $\Gamma_{\infty}$ are congruence subgroups of level $1.$ The index of $\Gamma_0(N)$ in $\Gamma$ is 
	\begin{align*}
		[\Gamma:\Gamma_0(N)]=N\prod\limits_{p|N}\left(1+\frac 1p\right)
	\end{align*}
	where $p$ runs over prime divisors of $N$.

	Let $\mathbb{H}$ denote the upper half of the complex plane. The group 
	\begin{align*}
		\mathrm{GL_2^{+}}(\mathbb{R}):= \left\{ \begin{bmatrix}
			a && b \\c && d
		\end{bmatrix}: a, b, c, d \in \mathbb{R}, ad-bc>0 \right\}
	\end{align*}
	acts on $\mathbb{H}$ by $ \begin{bmatrix}
		a && b \\c && d
	\end{bmatrix} z = \frac{az+b}{cz+d}.$ We identify $\infty$ with $\frac{1}{0}$ and define $ \begin{bmatrix}
		a && b \\c && d
	\end{bmatrix} \frac{r}{s} = \frac{ar+bs}{cr+ds},$ where $\frac{r}{s} \in \mathbb{Q} \cup \{ \infty\}$. This gives an action of $\mathrm{GL_2^{+}}(\mathbb{R})$ on the extended half plane $\mathbb{H}^{*}=\mathbb{H} \cup \mathbb{Q} \cup \{\infty\}$. Suppose that $\Gamma$ is a congruence subgroup of $\mathrm{SL_2}(\mathbb{Z})$. A cusp of $\Gamma$ is an equivalence class in $\mathbb{P}^{1}=\mathbb{Q} \cup \{\infty\}$ under the action of $\Gamma$.
	
	The group $\mathrm{GL_2^{+}}(\mathbb{R})$ also acts on functions $f:\mathbb{H} \rightarrow \mathbb{C}$. In particular, suppose that $\gamma=\begin{bmatrix}
		a && b \\c && d
	\end{bmatrix}\in \mathrm{GL_2^{+}}(\mathbb{R})$. If $f(z)$ is a meromorphic function on $\mathbb{H}$ and $k$ is an integer, then define the slash operator $|_{k}$ by
	\begin{align*}
		(f|_{k} \gamma)(z):= (\det \gamma)^{k/2} (cz+d)^{-k} f(\gamma z).
	\end{align*}
	
	\begin{definition}
		Let $\Gamma$ be a congruence subgroup of level $N$. A holomorphic function $f:\mathbb{H} \rightarrow \mathbb{C}$ is called a modular form of integer weight $k$ on $\Gamma$ if the following hold:
		\begin{enumerate}[$(1)$]
			\item For all $z \in \mathbb{H}$ and $\begin{bmatrix}
				a && b \\c && d
			\end{bmatrix}\in \Gamma$,
			\begin{align*}
				f \left( \frac{az+b}{cz+d}\right)=(cz+d)^{k} f(z).
			\end{align*}
			
			\item If $\gamma\in SL_2 (\mathbb{Z})$, then $(f|_{k} \gamma)(z)$ has a Fourier expnasion of the form
			\begin{align*}
				(f|_{k} \gamma)(z):= \sum \limits_{n\geq 0}a_{\gamma}(n) q_N^{n}
			\end{align*}
			where $q_N:=e^{2\pi i z /N}$.
		\end{enumerate}
	\end{definition}
	For a positive integer $k$, the complex vector space of modular forms of weight $k$ with respect to a congruence subgroup $\Gamma$ is denoted by $M_{k}(\Gamma)$.
	
	\begin{definition} \cite[Definition 1.15]{Ono2004}
		Let $\chi$ be a Dirichlet character modulo $N$. We say that a modular form $f \in M_{k}(\Gamma_1(N))$ has Nobentypus character $\chi$ if 
		\begin{align*}
			f \left( \frac{az+b}{cz+d}\right)=\chi(d) (cz+d)^{k} f(z)
		\end{align*}
		for all $z \in \mathbb{H}$ and $\begin{bmatrix}
			a && b \\c && d
		\end{bmatrix}\in \Gamma_{0}(N)$. The space of such modular forms is denoted by $M_{k}(\Gamma_0(N), \chi)$.
	\end{definition}
	
	The relevant modular forms for the results obtained in this article arise from eta-quotients. We recall the Dedekind eta-function $\eta (z)$ which is defined by 
	\begin{align}\label{2e1}
		\eta (z):= q^{1/24}(q;q)_{\infty}=q^{1/24} \prod\limits_{n=1}^{\infty} (1-q^n)
	\end{align}
	where $q:=e^{2\pi i z}$ and $z \in \mathbb{H}$. A function $f(z)$ is called an eta-quotient if it is of the form
	\begin{align*}
		f(z):= \prod\limits_{\delta|N} \eta(\delta z)^{r_{\delta}}
	\end{align*}
	where $N$ and $r_{\delta}$ are integers with $N>0$. 
	
	\begin{theorem} \cite[Theorem 1.64]{Ono2004} \label{thm2.1}
		If $f(z)=\prod\limits_{\delta|N} \eta(\delta z)^{r_{\delta}}$ is an eta-quotient such that $k= \frac 12$ $\sum_{\delta|N} r_{\delta}\in \mathbb{Z}$, 
		\begin{align*}
			\sum\limits_{\delta|N} \delta r_{\delta} \equiv 0\pmod {24}	\quad \textrm{and} \quad \sum\limits_{\delta|N} \frac{N}{\delta}r_{\delta} \equiv 0\pmod {24},
		\end{align*}
		then $f(z)$ satisfies
		\begin{align*}
			f \left( \frac{az+b}{cz+d}\right)=\chi(d) (cz+d)^{k} f(z)
		\end{align*}
		for each $\begin{bmatrix}
			a && b \\c && d
		\end{bmatrix}\in \Gamma_{0}(N)$. Here the character $\chi$ is defined by $\chi(d):= \left(\frac{(-1)^{k}s}{d}\right)$ where $s=\prod_{\delta|N} \delta ^{r_{\delta}}$.
	\end{theorem}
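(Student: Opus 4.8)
The plan is to deduce this from a single analytic input: the classical transformation law of the Dedekind eta-function $\eta(z)=q^{1/24}(q;q)_\infty$ under the full modular group. Recall that for $\gamma=\begin{bmatrix} a & b \\ c & d\end{bmatrix}\in\mathrm{SL}_2(\mathbb{Z})$ with $c>0$ one has
\begin{align*}
\eta(\gamma z)=\epsilon(a,b,c,d)\,\bigl(-i(cz+d)\bigr)^{1/2}\eta(z),\qquad \epsilon(a,b,c,d)=\exp\!\left(\pi i\left(\tfrac{a+d}{12c}-s(d,c)\right)\right),
\end{align*}
where $s(d,c)$ is the Dedekind sum and $\epsilon$ is a $24$th root of unity. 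I would take this as given and track the induced multiplier of the whole eta-quotient $f$ under an arbitrary $\gamma\in\Gamma_0(N)$, reducing the theorem to a purely arithmetic identification of that multiplier with $\chi(d)$.

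First I would treat each factor $\eta(\delta z)^{r_\delta}$ separately using a conjugation trick. Since $\gamma\in\Gamma_0(N)$ forces $N\mid c$ and $\delta\mid N$, we have $\delta\mid c$, so $\gamma_\delta:=\begin{bmatrix} a & \delta b \\ c/\delta & d\end{bmatrix}$ has integer entries and determinant $ad-bc=1$, hence $\gamma_\delta\in\mathrm{SL}_2(\mathbb{Z})$. A direct computation gives the intertwining relation $\delta\cdot(\gamma z)=\gamma_\delta\cdot(\delta z)$. Applying the eta law to $\eta(\gamma_\delta(\delta z))$, noting $(c/\delta)(\delta z)+d=cz+d$, and multiplying over $\delta\mid N$ with exponents $r_\delta$, I obtain
\begin{align*}
f(\gamma z)=\prod_{\delta\mid N}\eta(\delta\gamma z)^{r_\delta}=(-i)^{k}\left(\prod_{\delta\mid N}\epsilon(\gamma_\delta)^{r_\delta}\right)(cz+d)^{\frac12\sum_{\delta\mid N}r_\delta}f(z)=\mu(\gamma)\,(cz+d)^{k}f(z),
\end{align*}
where $k=\tfrac12\sum_{\delta\mid N}r_\delta\in\mathbb{Z}$ by hypothesis and $\mu(\gamma)=(-i)^{k}\prod_{\delta\mid N}\epsilon(\gamma_\delta)^{r_\delta}$ is a root of unity independent of $z$.

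The main work, and the genuine obstacle, is to prove $\mu(\gamma)=\chi(d)=\left(\frac{(-1)^{k}s}{d}\right)$ with $s=\prod_{\delta\mid N}\delta^{r_\delta}$. Writing $\mu(\gamma)$ as one exponential, the linear terms contribute $\tfrac{a+d}{12c}\sum_{\delta\mid N}\delta r_\delta$, which is annihilated modulo the $24$th-root phase by the first hypothesis $\sum_{\delta\mid N}\delta r_\delta\equiv0\pmod{24}$. For the Dedekind-sum terms $\sum_{\delta}r_\delta\,s(d,c/\delta)$ I would apply the reciprocity law
\begin{align*}
s(h,k)+s(k,h)=-\tfrac14+\tfrac{1}{12}\left(\tfrac{h}{k}+\tfrac{k}{h}+\tfrac{1}{hk}\right)
\end{align*}
to rewrite each $s(d,c/\delta)$; the resulting rational terms assemble into $\tfrac1c\sum_\delta\delta r_\delta$ and $\tfrac{1}{cd}\sum_\delta\delta r_\delta$, killed by the first hypothesis, together with $\tfrac{c/N}{d}\sum_\delta\tfrac{N}{\delta}r_\delta$, killed precisely by the second hypothesis $\sum_{\delta\mid N}\tfrac{N}{\delta}r_\delta\equiv0\pmod{24}$ (here $c/N\in\mathbb{Z}$ since $N\mid c$). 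This is exactly where both divisibility conditions become indispensable. After this collapse, the constant $-\tfrac14\sum_\delta r_\delta=-k/2$ combines with the prefactor $(-i)^{k}=e^{-\pi i k/2}$, and the classical Petersson--Weber evaluation of the theta/eta multiplier in terms of the Kronecker--Jacobi symbol converts the surviving quadratic phase in $d$ into $\left(\frac{(-1)^{k}s}{d}\right)$. The remaining cases are routine: $c=0$ reduces to $\eta(z+1)=e^{\pi i/12}\eta(z)$ (again via the first congruence), and $c<0$ follows by replacing $\gamma$ with $-\gamma$. The delicate accounting of $24$th roots of unity and the final passage from Dedekind sums to the Jacobi symbol is where essentially all the difficulty lies; everything else is formal.
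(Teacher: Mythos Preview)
The paper does not give its own proof of this statement: Theorem~\ref{thm2.1} is quoted verbatim from \cite[Theorem~1.64]{Ono2004} as a preliminary tool, with no argument supplied. So there is nothing in the paper to compare your proposal against.

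That said, your outline is the classical route to this result. The conjugation $\gamma_\delta=\begin{bmatrix}a&\delta b\\ c/\delta&d\end{bmatrix}\in\mathrm{SL}_2(\mathbb{Z})$ and the identity $\delta\cdot(\gamma z)=\gamma_\delta\cdot(\delta z)$ are exactly how one reduces the transformation of $\eta(\delta z)$ under $\Gamma_0(N)$ to the known eta multiplier, and the two $24$-divisibility hypotheses do enter precisely to force the accumulated $24$th-root-of-unity phase to collapse to the quadratic character $\chi(d)$. One caution: your reciprocity sketch is slightly optimistic about which pieces are ``killed'' outright. After reciprocity the rational contributions $\tfrac{d}{12c}\sum_\delta\delta r_\delta$ and $\tfrac{c}{12Nd}\sum_\delta\tfrac{N}{\delta}r_\delta$ are not individually in $2\mathbb{Z}$ in general (so $e^{\pi i(\cdot)}$ is not automatically $1$); they must be combined with the surviving sums $\sum_\delta r_\delta\, s(c/\delta,d)$ before the whole expression is recognized as $\left(\frac{(-1)^ks}{d}\right)$. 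In practice many treatments bypass reciprocity and use the Petersson--Knopp closed form for the eta multiplier in terms of Jacobi symbols directly, which streamlines this step. You have correctly identified that this final arithmetic identification is where the real difficulty lies, and the overall strategy is the standard one.
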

	
	\begin{theorem} \cite[Theorem 1.65]{Ono2004} \label{thm2.2}
		Let $c,d$ and $N$ be positive integers with $d|N$ and $\gcd(c,d)=1$. If $f$ is an eta-quotient satisfying the conditions of Theorem \ref{thm2.1} for $N$, then the order of vanishing of $f(z)$ at the cusp $\frac{c}{d}$ is
		\begin{align*}
			\frac{N}{24}\sum\limits_{\delta|N} \frac{\gcd(d, \delta)^2 r_{\delta}}{\gcd(d, \frac{N}{ d} )d \delta}.
		\end{align*}
	\end{theorem}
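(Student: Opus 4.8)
The plan is to reduce the computation to a single Dedekind eta factor and to track how its $q$-expansion transforms under a scaling matrix carrying $\infty$ to the cusp $c/d$. Since $f(z)=\prod_{\delta\mid N}\eta(\delta z)^{r_{\delta}}$ and the order of vanishing at a cusp is additive over products, it suffices to determine the order of vanishing of $\eta(\delta z)$ at $c/d$ for each $\delta\mid N$, multiply by $r_{\delta}$, and sum. Throughout I would use the product expansion $\eta(z)=q^{1/24}\prod_{n\ge 1}(1-q^{n})$ from \eqref{2e1}, together with the classical transformation law $\eta(\gamma z)=\varepsilon(\gamma)(cz+d)^{1/2}\eta(z)$ for $\gamma=\begin{bmatrix} a & b \\ c & d\end{bmatrix}\in\mathrm{SL}_2(\mathbb{Z})$, where $\varepsilon(\gamma)$ is a $24$th root of unity.

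First I would fix a matrix $L=\begin{bmatrix} c & \beta \\ d & \delta_0\end{bmatrix}\in\mathrm{SL}_2(\mathbb{Z})$, which exists because $\gcd(c,d)=1$ and which sends $\infty$ to $c/d$. The order of vanishing at the cusp is read off from the $q$-expansion of the slashed function in the local uniformizer $q_h=e^{2\pi i z/h}$, where $h=N/\!\left(d\,\gcd(d,N/d)\right)=N/\gcd(d^2,N)$ is the width of $c/d$ for $\Gamma_0(N)$. Writing $\begin{bmatrix} \delta & 0 \\ 0 & 1\end{bmatrix}L=\begin{bmatrix} \delta c & \delta\beta \\ d & \delta_0\end{bmatrix}$, I would set $g=\gcd(\delta c,d)=\gcd(\delta,d)$, the last equality using $\gcd(c,d)=1$, and factor this matrix as $\gamma'\begin{bmatrix} g & t \\ 0 & \delta/g\end{bmatrix}$ with $\gamma'\in\mathrm{SL}_2(\mathbb{Z})$; this is possible because the first column $\begin{bmatrix} \delta c/g \\ d/g\end{bmatrix}$ is primitive, and a determinant check gives $\det=\delta$ on both sides. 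Consequently $\eta(\delta\cdot Lz)=\eta(\gamma' w)$ with $w=(g^{2}z+gt)/\delta$.

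Next I would apply the eta transformation law to $\eta(\gamma' w)$. The multiplier $\varepsilon(\gamma')$ has modulus $1$ and the automorphy factor $(c'w+d')^{1/2}$ is holomorphic and nonvanishing as $z\to i\infty$, so neither affects the order of vanishing; the order is governed entirely by the leading term of $\eta(w)=e^{2\pi i w/24}\prod_{n\ge 1}(1-e^{2\pi i n w})$. Substituting $w=(g^{2}z+gt)/\delta$ shows that the leading exponent in $q=e^{2\pi i z}$ equals $g^{2}/(24\delta)$. Converting to the local variable $q_h$ multiplies this by the width $h$, giving $\dfrac{N g^{2}}{24\,d\,\gcd(d,N/d)\,\delta}$ as the order of vanishing of $\eta(\delta z)$ at $c/d$. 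Finally, summing $r_{\delta}$ times this quantity over all $\delta\mid N$ produces exactly $\dfrac{N}{24}\sum_{\delta\mid N}\dfrac{\gcd(d,\delta)^{2} r_{\delta}}{\gcd(d,N/d)\,d\,\delta}$, which is the claimed formula.

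The step I expect to be the main obstacle is the bookkeeping around the cusp width together with the matrix factorization: one must verify that $g=\gcd(\delta,d)$ is the correct common divisor extracted (which hinges on $\gcd(c,d)=1$), that the residual upper-triangular matrix has determinant $\delta$ so that $\gamma'$ genuinely lies in $\mathrm{SL}_2(\mathbb{Z})$, and, most delicately, that the normalization by the width $h=N/\gcd(d^2,N)$ is the one compatible with the notion of order of vanishing attached to the automorphy of Theorem \ref{thm2.1}. Checking that the multiplier system and the weight factor contribute nothing to the order, while conceptually routine, also requires care to state cleanly.
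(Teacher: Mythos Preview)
The paper does not supply a proof of this statement: Theorem~\ref{thm2.2} is quoted verbatim from \cite[Theorem~1.65]{Ono2004} as a black box and is only \emph{used} (in conjunction with Theorem~\ref{thm2.1}) to certify that certain eta-quotients lie in the expected spaces of modular forms. There is therefore no ``paper's own proof'' to compare your proposal against.

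That said, your argument is the standard one (essentially the proof given in Ono's CBMS monograph and in K\"ohler's book on eta products): reduce to a single factor $\eta(\delta z)$, send $\infty$ to $c/d$ via $L\in\mathrm{SL}_2(\mathbb{Z})$, factor $\begin{bmatrix}\delta&0\\0&1\end{bmatrix}L=\gamma'\begin{bmatrix}g&t\\0&\delta/g\end{bmatrix}$ with $g=\gcd(\delta,d)$ and $\gamma'\in\mathrm{SL}_2(\mathbb{Z})$, and read the leading $q$-exponent $g^2/(24\delta)$ from $\eta(w)$ with $w=(g^2z+gt)/\delta$, then scale by the cusp width $h=N/(d\gcd(d,N/d))$. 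One point deserves more care than you give it: the phrase ``the automorphy factor $(c'w+d')^{1/2}$ is holomorphic and nonvanishing as $z\to i\infty$'' is not literally true, since $w\to i\infty$ forces $c'w+d'\to\infty$ whenever $c'\neq 0$. The correct bookkeeping is that the cocycle relation gives $(c'w+d')=(g/\delta)(dz+\delta_0)$, so upon taking the product over all $\delta\mid N$ with exponents $r_\delta$ one obtains exactly $(dz+\delta_0)^{k}$ times a nonzero constant, and this $(dz+\delta_0)^{k}$ is precisely cancelled by the slash normalization $(f|_kL)(z)=(dz+\delta_0)^{-k}f(Lz)$. With that adjustment your outline is complete and matches the textbook proof.
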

	Suppose that $f(z)$ is an eta-quotient satisfying the conditions of Theorem \ref{thm2.1} and that the associated weight $k$ is a positive integer. If $f(z)$ is holomorphic at all of the cusps of $\Gamma_0(N)$, then $f(z) \in M_{k}(\Gamma_0(N), \chi)$. Theorem \ref{thm2.2} gives the necessary criterion for determining orders of an eta-quotient at cusps. In the proofs of our results, we use Theorems \ref{thm2.1} and \ref{thm2.2} to prove that $f(z) \in M_{k}(\Gamma_0(N), \chi)$ for certain eta-quotients $f(z)$ we consider in the sequel.

	We recall the definition of Hecke operators and a few relevant results. Let $m$ be a positive integer and $f(z)= \sum \limits_{n= 0}^{\infty}a(n) q^{n}\in M_{k}(\Gamma_0(N), \chi)$. Then the action of Hecke operator $T_m$ on $f(z)$ is defined by
	\begin{align*}
		f(z)|T_{m} := \sum \limits_{n= 0}^{\infty} \left(\sum \limits_{d|\gcd(n,m)} \chi(d) d^{k-1} a\left(\frac{mn}{d^2}\right)\right)q^{n}.
	\end{align*}
	In particular, if $m=p$ is a prime, we have
	\begin{align*}
		f(z)|T_p := \sum \limits_{n= 0}^{\infty}\left( a(pn) + \chi(p) p^{k-1} a\left(\frac{n}{p}\right)\right)q^{n}.
	\end{align*}
	We note that $a(n)=0$ unless $n$ is a non-negative integer.
		\section{Proofs of Theorem \ref{mainthm3} and \ref{mainthm4}}
	
	\subsection{Prelude to the proofs}
	We define
	\begin{small}
		\begin{equation}\label{4e1}
			\sum_{n=1}^{\infty}  b(n) q^n=	q(q^8;q^8)_{\infty}(q^{16};q^{16})_{\infty} \quad \textrm{and} \quad \sum_{n=0}^{\infty}c(n) q^n:= (q;q)_{\infty}^3.
		\end{equation}
	\end{small}
	If $p\nmid n$, then we set $b\left(\frac{n}{p}\right)=0$ and $c\left(\frac{n}{p}\right)=0$. We have the following result.
	\begin{lemma}\label{eta-quotient}
		For $n\geq0$ and for a prime $p\not \equiv 1 \pmod 8$, we have 
		\begin{small}
			\begin{align}\label{4e2}
				b(pn)=(-1)\left(\frac{-2}{p}\right)b\left(\frac{n}{p}\right).
			\end{align}
		\end{small}
		Further if $j \not \equiv 0 \pmod p$, then 
		\begin{small}
			\begin{align}\label{4e3}
				b(p^2n+pj)=0.
			\end{align}
		\end{small}
	\end{lemma}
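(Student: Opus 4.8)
The plan is to realize the generating series for $b(n)$ as an eta-quotient that is a Hecke eigenform of weight one, so that \eqref{4e2} falls out from the fact that the relevant eigenvalue vanishes. First, using the definition \eqref{2e1} of the Dedekind eta-function, I would rewrite
\[
f(z):=\sum_{n\ge 1}b(n)q^n=q\,(q^8;q^8)_\infty(q^{16};q^{16})_\infty=\eta(8z)\,\eta(16z),
\]
since $\eta(8z)\eta(16z)=q^{8/24}\,q^{16/24}\,(q^8;q^8)_\infty(q^{16};q^{16})_\infty=q\,(q^8;q^8)_\infty(q^{16};q^{16})_\infty$. The one elementary fact about individual coefficients that the argument needs is that $(q^8;q^8)_\infty(q^{16};q^{16})_\infty$ is a power series in $q^8$, so $b(n)=0$ whenever $n\not\equiv 1\pmod 8$; in particular $b(p)=0$ for every prime $p\not\equiv 1\pmod 8$.

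Next I would show that $f$ lies in the space of cusp forms $S_1(\Gamma_0(128),\chi)$ with $\chi(d)=\left(\frac{-2}{d}\right)$. Applying Theorem \ref{thm2.1} with $N=128$ and $r_8=r_{16}=1$: one has $k=\tfrac12(r_8+r_{16})=1$, $\sum_{\delta|N}\delta r_\delta=8+16=24\equiv 0\pmod{24}$, and $\sum_{\delta|N}\frac{N}{\delta}r_\delta=16+8=24\equiv 0\pmod{24}$, so $f$ satisfies the weight-one transformation law on $\Gamma_0(128)$ with character $\chi(d)=\left(\frac{(-1)^k s}{d}\right)$ where $s=8\cdot 16=128$; since $-128=-2^7$ this is $\chi(d)=\left(\frac{-2}{d}\right)$. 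To place $f$ in the cuspidal subspace I would use Theorem \ref{thm2.2}: the order of vanishing of $f$ at a cusp $c/d$ with $d|128$ is
\[
\frac{128}{24}\cdot\frac{1}{\gcd(d,128/d)\,d}\left(\frac{\gcd(d,8)^2}{8}+\frac{\gcd(d,16)^2}{16}\right),
\]
and running through $d\in\{1,2,4,8,16,32,64,128\}$ shows this equals $1$ in every case. Hence $f$ is holomorphic (in fact vanishing) at all cusps, so $f\in S_1(\Gamma_0(128),\chi)$.

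The main step is then that $f$ is a Hecke eigenform: the space $S_1(\Gamma_0(128),\chi)$ is one-dimensional, spanned by the newform $\eta(8z)\eta(16z)$, so it is preserved by every Hecke operator $T_p$ and $f|T_p=\lambda_p f$ for a scalar $\lambda_p$. Comparing the coefficients of $q^1$ and using $b(1)=1$ together with $b(1/p)=0$ gives $\lambda_p=b(p)$. For $p\not\equiv 1\pmod 8$ we have $b(p)=0$, hence $f|T_p=0$; equating the coefficient of $q^n$ on both sides, with $k=1$ (so $p^{k-1}=1$) and $\chi(p)=\left(\frac{-2}{p}\right)$, yields
\[
b(pn)+\left(\frac{-2}{p}\right)b\!\left(\frac{n}{p}\right)=0,
\]
which is \eqref{4e2}. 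For \eqref{4e3}, I would write $p^2n+pj=p\,(pn+j)$; since $p\nmid j$ the integer $pn+j$ is not divisible by $p$, so $b((pn+j)/p)=0$, and \eqref{4e2} applied with $n$ replaced by $pn+j$ gives $b(p^2n+pj)=-\left(\frac{-2}{p}\right)b((pn+j)/p)=0$.

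The step I expect to be the real obstacle is the input that $f$ is a Hecke eigenform, i.e.\ the one-dimensionality of the weight-one space $S_1(\Gamma_0(128),\chi)$: weight-one spaces are not controlled by the usual dimension formulas, so this rests either on a direct computation---checking there is no weight-one cusp form of character $\chi$ at the proper divisors of $128$, so that $\eta(8z)\eta(16z)$ exhausts the newform space---or on identifying $f$ with the known dihedral (CM) newform of level $128$. The remaining ingredients, the cusp count from Theorem \ref{thm2.2} and the bookkeeping with the Hecke relation, are routine.
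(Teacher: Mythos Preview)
Your argument is correct and follows essentially the same route as the paper: identify $\sum b(n)q^n$ with $\eta(8z)\eta(16z)\in S_1(\Gamma_0(128),\chi)$, use that it is a Hecke eigenform to get $b(pn)+\left(\frac{-2}{p}\right)b(n/p)=\lambda(p)b(n)$, read off $\lambda(p)=b(p)=0$ from the residue of $p$ modulo $8$, and then specialize $n\mapsto pn+j$ for \eqref{4e3}.

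The only substantive difference is how the eigenform property is justified. The paper does not argue via one-dimensionality of $S_1(\Gamma_0(128),\chi)$; it simply invokes Martin's classification of multiplicative eta-quotients \cite[Table~1]{Martin1996}, where $\eta(8z)\eta(16z)$ appears explicitly as a Hecke eigenform. This sidesteps exactly the obstacle you flagged: weight-one dimension counts are delicate, and while your claim that the space is one-dimensional happens to be true (the form is new of level $128$ and there are no oldforms with this character), verifying it from scratch is more work than citing Martin. Your explicit cusp-order check via Theorem~\ref{thm2.2} is a nice addition that the paper omits.
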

	\begin{proof} Let $p$ be a prime with $p\not \equiv 1 \pmod 8$. Using \eqref{2e1}, we note that
		\begin{small}
			\begin{equation*}
				\sum_{n=1}^{\infty}  b(n) q^n=	\eta(8z) \eta(16z).
			\end{equation*}
		\end{small}
		By using Theorem \ref{thm2.1}, we obtain that $\eta(8z) \eta(16z) \in S_1(\Gamma_{0}(128), \left(\frac{-128}{\bullet}\right)). $ Thus $\eta(8z) \eta(16z) $ has the Fourier expansion given by
		\begin{small}
			\begin{equation*}
				\sum_{n=1}^{\infty}  b(n) q^n=	\eta(8z) \eta(16z)= q-q^9- 2 q^{17}+ \cdots.
			\end{equation*}
		\end{small}
		Therefore, $b(n)=0$ for all $n \geq 0$ with $n \not \equiv 1 \pmod 8$. Since $ \eta(8z) \eta(16z)$ is a Hecke eigenform, we obtain from \cite[Table 1]{Martin1996} that
		$$\eta(8z) \eta(16z)|T_p= \sum_{n=1}^{\infty} \left(b(pn)+ \left(\frac{-128}{p}\right) b\left(\frac{n}{p}\right)\right) q^n = \lambda(p)\sum_{n=1}^{\infty} b(n)q^n.$$ Note that $ \left(\frac{-128}{p}\right) = \left(\frac{-2}{p}\right) $. Comparing the coefficients of $q^n$ on both sides of the above equation, we get
		\begin{small}
			\begin{equation}\label{4e4}
				b(pn)+ \left(\frac{-2}{p}\right) b\left(\frac{n}{p}\right) = \lambda(p) b(n).
			\end{equation}
		\end{small}
		Since $b(1)=1$ and $b(\frac{1}{p})=0,$ by substituting $n=1$ in the above expression, we get $b(p)=\lambda(p).$ Further, since $b(p)=0$, we obtain that $\lambda(p)=0$. Hence, we conclude from \eqref{4e4} that 
		\begin{small}
			\begin{equation}\label{4e5}
				b(pn)+ \left(\frac{-2}{p}\right) b\left(\frac{n}{p}\right) =0.
			\end{equation}
		\end{small}
		which proves \eqref{4e2}. For $j \not \equiv 0 \pmod p$, replacing $n$ by $pn+j$ in \eqref{4e5}, we get $b(p^2n+ pj)=0$ which proves \eqref{4e3}.
	\end{proof}
	\begin{lemma}\label{qq3}
		For $n\geq0$ and for a prime $p\equiv 1 \pmod 4$, we have 
		\begin{small}
			\begin{align}\label{4e6}
				c\left(p^2n+\frac{p^2-1}{8}\right)=pc(n).
			\end{align}
		\end{small}
		If $p\nmid n$, then 
		\begin{small}
			\begin{align}\label{4e7}
				c\left(pn+\frac{p^2-1}{8}\right)=0.
			\end{align}
		\end{small}
	\end{lemma}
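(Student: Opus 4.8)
The plan is to make the coefficients $c(n)$ of $(q;q)_\infty^3$ completely explicit and then argue arithmetically, in the same spirit as the proof of Lemma~\ref{eta-quotient}. By Jacobi's identity,
\begin{align*}
	(q;q)_\infty^3=\sum_{n=0}^{\infty}(-1)^n(2n+1)q^{n(n+1)/2},
\end{align*}
and since $8\cdot\tfrac{n(n+1)}{2}+1=(2n+1)^2$, this says precisely that $c(m)=0$ unless $8m+1$ is a perfect square, in which case, writing $8m+1=k^2$ with $k$ odd and positive, $c(m)=(-1)^{(k-1)/2}k=\left(\frac{-1}{k}\right)k$. (Equivalently, $q(q^8;q^8)_\infty^3=\eta(8z)^3$ is a weight-$\tfrac32$ Hecke eigenform and \eqref{4e6} is its $T_{p^2}$-relation; but the explicit formula keeps the argument self-contained.) Note also that $\frac{p^2-1}{8}\in\Z$ for every odd prime $p$, so both displayed arguments are integers.

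For \eqref{4e6}, I would put $m=p^2n+\frac{p^2-1}{8}$ and compute $8m+1=p^2(8n+1)$. Since $p$ is prime, $p^2(8n+1)$ is a perfect square if and only if $8n+1$ is one; if it is not, then $c(m)=0$ and also $c(n)=0$, so both sides of \eqref{4e6} vanish. Otherwise write $8n+1=\ell^2$ with $\ell$ odd and positive, so that $8m+1=(p\ell)^2$ with $p\ell$ odd, giving $c(m)=\left(\frac{-1}{p\ell}\right)p\ell=\left(\frac{-1}{p}\right)\left(\frac{-1}{\ell}\right)p\ell$, while $c(n)=\left(\frac{-1}{\ell}\right)\ell$. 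Because $p\equiv1\pmod 4$ makes $\left(\frac{-1}{p}\right)=1$, this collapses to $c(m)=p\,c(n)$, which is \eqref{4e6}; this is the only place the hypothesis $p\equiv1\pmod4$ is used, and it is exactly what removes the sign $\left(\frac{-1}{p}\right)$.

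For \eqref{4e7}, I would take $m=pn+\frac{p^2-1}{8}$, so that $8m+1=8pn+p^2=p(8n+p)$. If $c(m)\neq0$, then $p(8n+p)$ is a perfect square; since $p$ is prime and divides $p(8n+p)$, it must actually divide $8n+p$ (otherwise $p$ would occur in $p(8n+p)$ to an odd power), hence $p\mid 8n$, hence $p\mid n$ because $\gcd(p,8)=1$. This contradicts the hypothesis $p\nmid n$, so $c(m)=0$, which is \eqref{4e7}.

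I do not expect a genuine obstacle: once Jacobi's identity is invoked the argument is elementary number theory. The only points that need a little care are checking that multiplying the argument of $8(\cdot)+1$ by $p^2$ (respectively by $p$) cannot create a new perfect square unless the original expression already is one (respectively, cannot create a perfect square at all when $p\nmid n$) — both consequences of $p$ being prime — together with the sign bookkeeping in \eqref{4e6}, which is precisely governed by the congruence $p\equiv1\pmod4$.
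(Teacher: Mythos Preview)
Your proof is correct and follows essentially the same route as the paper: both invoke Jacobi's identity to make $c(m)$ explicit (nonzero only when $8m+1$ is an odd square, with value $(-1)^{(k-1)/2}k$), and then argue arithmetically with $8m+1=p^2(8n+1)$ and $8m+1=p(8n+p)$, using $p\equiv 1\pmod 4$ exactly to kill the sign. The only cosmetic difference is that the paper packages the two statements together by first proving $c\!\left(pn+\tfrac{p^2-1}{8}\right)=p\,c(n/p)$ for all $n$ and then specializing, whereas you treat \eqref{4e6} and \eqref{4e7} separately and write the sign as the Legendre symbol $\left(\tfrac{-1}{k}\right)$.
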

	\begin{proof}
		From \cite[Page 39, Entry 24(ii)]{Berndt1991}, we have
		\begin{small}
			\begin{align*}
				(q;q)_{\infty}^3=\sum_{n=0}^{\infty} (-1)^n (2n+1) q^{\frac{n(n+1)}{2}}.
			\end{align*}
		\end{small}
		Thus 
		\begin{small}
			\begin{align*}
				c(n)=\sum_{\substack{k=0\\\frac{k(k+1)}{2}=n}}^{\infty} (-1)^k (2k+1)=\sum_{\substack{k=0\\(2k+1)^2=8n+1}}^{\infty} (-1)^k (2k+1).
			\end{align*}
		\end{small}
		This implies
		\begin{small}
			\begin{align*}
				c\left(pn+\frac{p^2-1}{8}\right)=\sum_{\substack{k=0\\(2k+1)^2=8pn+p^2}}^{\infty} (-1)^k (2k+1).
			\end{align*}
		\end{small}
		Note that if $(2k+1)^2=8pn+p^2$, then $p|(2k+1)$ and therefore we can write $2k+1=p(2k^{'}+1)$ for some positive integer $k^{'}$. Further for such $k$, we have $k=\frac{2k+1}{2}-\frac{1}{2}=\frac{p(2k^{'}+1)}{2}-\frac{1}{2}=pk^{'}+\frac{p-1}{2}$ which gives $(-1)^k=(-1)^{k^{'}}$. Hence
		\begin{small}
			\begin{align*}
				c\left(pn+\frac{p^2-1}{8}\right)=p\sum_{\substack{k=0\\(2k^{'}+1)^2=8\frac{n}{p}+1}}^{\infty} (-1)^{k^{'}} (2k^{'}+1)=pc\left(\frac{n}{p}\right).
			\end{align*}
		\end{small}
		Replacing $n$ by $pn$, we obtain \eqref{4e6}. Also, \eqref{4e7} follows since $c\left(\frac{n}{p}\right)=0$ if $p\nmid n$.  This completes the proof.
	\end{proof}
	
	We recall the following identity for $13$-core partitions obtained by Kuwali Das.
	\begin{lemma}\cite[Theorem 1]{Das2016}\label{lem3}
		We have
		\begin{small} 
			\begin{equation*}
				\sum_{n=0}^{\infty} a_{13}\left(104n+6\right)q^n \equiv \left(q;q\right)_{\infty} \left(q^2;q^2\right)_{\infty} \pmod2.
			\end{equation*}
		\end{small}
	\end{lemma}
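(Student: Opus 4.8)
The plan is to prove the congruence at the level of generating functions, reducing everything modulo $2$. By \eqref{eq1}, $\sum_{n\ge0}a_{13}(n)q^n=(q^{13};q^{13})_\infty^{13}/(q;q)_\infty$. Using the elementary fact $(q^a;q^a)_\infty^{2}\equiv(q^{2a};q^{2a})_\infty\pmod2$ (iterated to exponents $4$ and $8$, since $13=1+4+8$) together with $1/(q;q)_\infty\equiv(q;q)_\infty/(q^2;q^2)_\infty\pmod2$, one gets
\begin{equation*}
\sum_{n\ge0}a_{13}(n)q^n\equiv\frac{(q;q)_\infty}{(q^2;q^2)_\infty}\,(q^{13};q^{13})_\infty(q^{52};q^{52})_\infty(q^{104};q^{104})_\infty\pmod2 ;
\end{equation*}
in other words, modulo $2$ the series equals $(q;q^2)_\infty\,g(q^{13})$ with $g(q):=(q;q)_\infty(q^4;q^4)_\infty(q^8;q^8)_\infty$.

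The central step is to isolate the progression $104n+6$. Since $104=8\cdot13$ and $g(q^{13})$ contributes only exponents divisible by $13$, I would carry out first a $13$-dissection (extracting the exponents $\equiv6\pmod{13}$, which reduces to dissecting $(q;q^2)_\infty=(q;q)_\infty/(q^2;q^2)_\infty$ via Euler's pentagonal number theorem) and then an $8$-dissection of the resulting series (using the classical $2$-dissections of Ramanujan's theta functions $\varphi(-q)$ and $\psi(q)$ and again $(q^a;q^a)_\infty^2\equiv(q^{2a};q^{2a})_\infty\pmod2$). Each dissection, performed modulo $2$, kills most terms; the survivors repackage as an eta-quotient, which after simplification should collapse to $(q;q)_\infty(q^2;q^2)_\infty$.

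As a guide to the target, note that modulo $2$ one has $(q;q)_\infty(q^2;q^2)_\infty\equiv(q;q)_\infty^{3}\equiv\sum_{n\ge0}q^{n(n+1)/2}=\psi(q)\pmod2$, by Jacobi's identity $(q;q)_\infty^{3}=\sum_{n\ge0}(-1)^{n}(2n+1)q^{n(n+1)/2}$. Equivalently: since $b(n)=0$ unless $n\equiv1\pmod8$ and the coefficient of $q^{8m+1}$ in $q(q^8;q^8)_\infty(q^{16};q^{16})_\infty$ equals the coefficient of $q^{m}$ in $(q;q)_\infty(q^2;q^2)_\infty$, one has $\sum_{m\ge0}b(8m+1)q^{m}=(q;q)_\infty(q^2;q^2)_\infty$ for the function $b$ of \eqref{4e1}, so the lemma is exactly $a_{13}(104n+6)\equiv b(8n+1)\pmod2$. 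Either reformulation turns the tail of the argument into a comparison of finitely many coefficients; alternatively, after multiplying by a suitable power of $q$, the dissected series and the target can be viewed as reductions modulo $2$ of modular forms, to which a Sturm-type bound applies.

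The main obstacle is precisely the dissection bookkeeping in the second step: the $13$- and $8$-fold multisections produce many terms, and one must determine exactly which survive modulo $2$ and check that they reassemble into the single product $(q;q)_\infty(q^2;q^2)_\infty$; this is where the classical theta-function dissection identities do the real work. The opening mod-$2$ simplification of the eta-powers and the final identification are routine.
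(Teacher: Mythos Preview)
The paper does not prove this lemma at all: it is quoted verbatim from Das \cite{Das2016} and used as a black box. So there is no ``paper's own proof'' to compare against; only Das's original argument exists, which proceeds via $q$-series manipulations and theta-function dissections of the $13$-core generating function.

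Your outline is broadly consistent with that style of argument, and the opening reduction
\[
\sum_{n\ge0}a_{13}(n)q^n\equiv (q;q^2)_\infty\,(q^{13};q^{13})_\infty(q^{52};q^{52})_\infty(q^{104};q^{104})_\infty\pmod2
\]
is correct. However, what you have written is a plan, not a proof: the entire content of the lemma lies in the $13$- and $8$-dissection you describe only in words, and you explicitly flag this as ``the main obstacle'' without carrying it out. In particular, extracting the residue class $6\pmod{13}$ from $(q;q^2)_\infty$ requires a genuine $13$-dissection identity (in Das's treatment this goes through a modular equation of degree $13$ relating Ramanujan's theta functions), and the subsequent $8$-dissection must then be tracked through the product with $(q^{13};q^{13})_\infty(q^{52};q^{52})_\infty(q^{104};q^{104})_\infty$. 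None of that is routine, and nothing in your sketch shows why the surviving terms collapse to $(q;q)_\infty(q^2;q^2)_\infty$ rather than some other combination.

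Your alternative suggestion --- rewriting both sides as reductions mod~$2$ of weight-one eta-quotients in a common space and invoking a Sturm bound --- is in fact the cleaner and more self-contained route, and is much closer in spirit to the modular-forms machinery the present paper already uses (Theorems~\ref{thm2.1} and~\ref{thm2.2}). If you want a complete proof independent of \cite{Das2016}, that is the path to execute: identify an explicit level and weight, compute the Sturm bound, and verify the required finite list of coefficients. As it stands, the proposal identifies the right objects but leaves the decisive step undone.
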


	\begin{lemma}
		For $j\not \equiv 0 \pmod p$ and $n\geq 0$, we have
		\begin{small}
			\begin{align}
				&	a_{2}\left(p^2n +p j +\frac{p^2-1}{8}\right)\equiv 0\pmod 2,\label{4e8}\\
				&	a_{2}\left(p^{2}n +\frac{p^2-1}{8}\right)\equiv \delta_p a_{2} (n)\pmod 2  \label{4e9}\\
				&	a_{13}\left(104p^{2}n +13 p (\epsilon_p j +p)-7\right)\equiv0 \pmod 2,\label{4e10}\\
				&	a_{13}\left(104p^{2}n +13p^{2}-7\right)\equiv \delta_p a_{13} (104n+6)\pmod 2  \label{4e11}
			\end{align}
		\end{small}
		where
		\begin{small}
			\begin{align*}
				\epsilon_p=\begin{cases}
					1 & \textrm { if } p \not \equiv 1 \pmod 8,\\
					8 & \textrm { if } p \equiv 1 \pmod 8.
				\end{cases}
				\quad \textrm{and} \quad
				\delta_p=\begin{cases}
					(-1)\left(\frac{-2}{p}\right) & \textrm { if } p \not \equiv 1 \pmod 8,\\
					p & \textrm { if } p \equiv 1 \pmod 8.
				\end{cases}
			\end{align*}
		\end{small}
	\end{lemma}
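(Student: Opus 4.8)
The plan is to translate all four congruences into statements about the two sequences already introduced: the Hecke eigenform coefficients $b(n)$ of $\eta(8z)\eta(16z)$ and the coefficients $c(n)$ of $(q;q)_\infty^3$, after which Lemmas \ref{eta-quotient} and \ref{qq3} do the work. First I would set up the dictionary, using only the elementary fact $(1-q^m)^2\equiv 1-q^{2m}\pmod2$. From \eqref{eq1} with $t=2$, $\sum_{n\ge0}a_2(n)q^n=\dfrac{(q^2;q^2)_\infty^2}{(q;q)_\infty}\equiv(q;q)_\infty(q^2;q^2)_\infty\equiv(q;q)_\infty^3\pmod2$, so $a_2(n)\equiv c(n)\pmod2$; and $\eta(8z)\eta(16z)=q(q^8;q^8)_\infty(q^{16};q^{16})_\infty\equiv q(q^8;q^8)_\infty^3\pmod2$, so $b(8n+1)\equiv c(n)\pmod2$ while $b(m)=0$ for $m\not\equiv1\pmod8$. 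Hence $a_2(n)\equiv b(8n+1)\equiv c(n)\pmod2$. The same manipulation applied to Das's Lemma \ref{lem3} gives $a_{13}(104n+6)\equiv c(n)\equiv b(8n+1)\pmod2$, and substituting $q\mapsto q^8$ and multiplying by $q$ rewrites Lemma \ref{lem3} as $\sum_{n\ge0}a_{13}(104n+6)q^{8n+1}\equiv\eta(8z)\eta(16z)\pmod2$; reading off coefficients, $a_{13}(13m-7)\equiv b(m)\pmod2$ (for $m\equiv1\pmod8$ this is Lemma \ref{lem3}; for the remaining $m$ it records that $a_{13}(13m-7)$ is even, the refinement discussed below).

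Next I would split on $p\bmod8$. If $p\equiv1\pmod8$ then $\epsilon_p=8$, $\delta_p=p$, and $p\equiv1\pmod4$, so Lemma \ref{qq3} applies to $c$: taking $M=p^2n+pj+\tfrac{p^2-1}{8}=p(pn+j)+\tfrac{p^2-1}{8}$ with $p\nmid pn+j$, \eqref{4e7} gives $c(M)=0$ and hence \eqref{4e8}, while taking $M=p^2n+\tfrac{p^2-1}{8}$ and using \eqref{4e6} gives $c(M)=p\,c(n)$, hence \eqref{4e9}. For \eqref{4e10} and \eqref{4e11} one checks (using $8\mid p^2-1$) that $13p(8j+p)-7=104\big(pj+\tfrac{p^2-1}{8}\big)+6$ and $13p^2-7=104\cdot\tfrac{p^2-1}{8}+6$, so the two $a_{13}$-arguments equal $104\big(p(pn+j)+\tfrac{p^2-1}{8}\big)+6$ and $104\big(p^2n+\tfrac{p^2-1}{8}\big)+6$; applying $a_{13}(104\,\cdot+6)\equiv c(\cdot)$ together with \eqref{4e7} resp.\ \eqref{4e6} (and $c(n)\equiv a_{13}(104n+6)$) gives \eqref{4e10} and \eqref{4e11}.

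If instead $p\not\equiv1\pmod8$ then $\epsilon_p=1$, $\delta_p=(-1)\big(\tfrac{-2}{p}\big)$, and Lemma \ref{eta-quotient} applies to $b$. For \eqref{4e8}, with $M=p^2n+pj+\tfrac{p^2-1}{8}$ one has $8M+1=p^2(8n+1)+p(8j)$, and since $p\geq5$ and $p\nmid j$ we have $p\nmid 8j$, so \eqref{4e3} gives $b(8M+1)=0$, i.e.\ $a_2(M)\equiv0\pmod2$. For \eqref{4e9}, $M=p^2n+\tfrac{p^2-1}{8}$ gives $8M+1=p^2(8n+1)$, and \eqref{4e2} applied with index $p(8n+1)$ yields $b\big(p^2(8n+1)\big)=\delta_p\,b(8n+1)$, hence $a_2(M)\equiv\delta_p a_2(n)\pmod2$. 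For \eqref{4e10}, write the $a_{13}$-argument as $13m-7$ with $m=8p^2n+pj+p^2=p\,(8pn+j+p)$; since $p\nmid j$ we get $p\mid m$ but $p^2\nmid m$, so $m/p^2\notin\mathbb{Z}$ and $a_{13}(13m-7)\equiv b(m)=\delta_p\,b(m/p^2)=0\pmod2$ by \eqref{4e2}. For \eqref{4e11}, the argument is $13m-7$ with $m=p^2(8n+1)\equiv1\pmod8$, so $a_{13}(13m-7)\equiv b\big(p^2(8n+1)\big)=\delta_p\,b(8n+1)\equiv\delta_p\,a_{13}(104n+6)\pmod2$ by \eqref{4e2} and the dictionary.

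The hard part is the input used for \eqref{4e10} when $p\not\equiv1\pmod8$ and $8\nmid j$: there the $a_{13}$-argument does not lie in the progression $104\mathbb{Z}+6$ covered by Lemma \ref{lem3}, so one needs the sharper statement that $a_{13}(13n+6)$ is even unless $n\equiv0\pmod8$ — equivalently $\sum_{n\ge0}a_{13}(13n+6)q^n\equiv(q^8;q^8)_\infty(q^{16};q^{16})_\infty\pmod2$, which is the full content of \cite[Theorem 1]{Das2016} (Lemma \ref{lem3} being its $8$-dissection) and can alternatively be extracted from a $13$-dissection of \eqref{eq1} modulo $2$. Granting this, everything else is routine; note in particular that $\delta_p$ is odd, so the multiplicative constants in \eqref{4e9} and \eqref{4e11} carry no mod-$2$ content and are retained only to mirror the genuine identities \eqref{4e2} and \eqref{4e6}.
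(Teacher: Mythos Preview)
Your argument follows the paper's route exactly: set up the dictionary $a_2(n)\equiv a_{13}(104n+6)\equiv b(8n+1)\equiv c(n)\pmod 2$ via \eqref{eq1}, Lemma~\ref{lem3} and the Freshman's Dream, then split on $p\bmod 8$ and invoke Lemma~\ref{eta-quotient} (for $b$) or Lemma~\ref{qq3} (for $c$). For \eqref{4e8}, \eqref{4e9}, \eqref{4e11} in both cases, and for \eqref{4e10} when $p\equiv 1\pmod 8$, your computations and the paper's are essentially identical, with yours slightly cleaner because you substitute directly into $8M+1$ rather than reparametrize through $j=-r\tfrac{p^2-1}{8}$.

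The one substantive difference is your handling of \eqref{4e10} when $p\not\equiv 1\pmod 8$. You correctly observe that for $8\nmid j$ the argument $104p^{2}n+13p(j+p)-7$ lies in $13\mathbb{Z}+6$ but \emph{not} in $104\mathbb{Z}+6$, so Lemma~\ref{lem3} as stated does not connect it to $b$ or $c$; you therefore invoke the stronger parity statement $\sum_{m\ge 0}a_{13}(13m+6)q^{m}\equiv (q^{8};q^{8})_{\infty}(q^{16};q^{16})_{\infty}\pmod 2$, of which Lemma~\ref{lem3} is the $q\mapsto q^{8}$ dissection. This is a genuine issue, and in fact the paper's own proof glosses over it: the paper reparametrizes via $j=-r(p^{2}-1)$, but since $8\mid p^{2}-1$ this only produces $j\equiv 0\pmod 8$. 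Concretely, the paper's \eqref{4e14} yields arguments of the form $104pk+13p^{2}-7$ ($p\nmid k$), whereas \eqref{4e10} with $\epsilon_p=1$ asserts the congruence for $13pk'+13p^{2}-7$ ($p\nmid k'$); the former is the sub-family $k'=8k$. So the paper's Case~1 derivation of \eqref{4e10} is incomplete for $8\nmid j$, and your proposed fix (appealing to the full Das identity on $13\mathbb{Z}+6$, or equivalently a $13$-dissection of \eqref{eq1} modulo $2$) is exactly what is needed. Your final remark that $\delta_p$ is odd, so it carries no mod-$2$ content, is also apt and not made explicit in the paper.
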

	\begin{proof}
		We consider the two cases $p \not\equiv 1 \pmod 8$ and $p \equiv 1 \pmod 8$ separately as follows.
		
		\noindent\textbf{Case 1: $p \not \equiv 1 \pmod 8.$}\\
		From \eqref{eq1}, we have
		\begin{small}
			\begin{equation*}
				\sum_{n=0}^{\infty} a_{2}(n)q^{n} \equiv \frac{(q^2;q^2)_{\infty}^2}{(q;q)_{\infty}} \equiv (q;q)_{\infty} (q^{2};q^{2})_{\infty}  \pmod2
			\end{equation*}
		\end{small}
		Thus using Lemma \ref{lem3}, we yield
		\begin{small}
			\begin{equation}\label{4e12}
				\sum_{n=0}^{\infty} a_{2}(n)q^{8n+1} \equiv \sum_{n=0}^{\infty} a_{13}(104n+6)q^{8n+1} \equiv q (q^8;q^8)_{\infty} (q^{16};q^{16})_{\infty}  \pmod2.
			\end{equation}
		\end{small}
		From \eqref{4e1} and \eqref{4e12}, we get
		\begin{small}
			\begin{equation}\label{4e13}
				a_{2}(n) \equiv a_{13}(104n+6) \equiv b(8n+1) \pmod 2.
			\end{equation}
		\end{small}
		Let $r \not \equiv 0 \pmod p$. From \eqref{4e3}, we have
		\begin{small}
			\begin{align*}
				b(p^2n+pr)=0.
			\end{align*}
		\end{small} Replacing $n$ by $8n-pr+1$, we obtain
		\begin{align*}
			b(8p^2n-p^3r+p^2+pr)=0.
		\end{align*} 
		Note that $8p^2n-p^3r+p^2+pr=8(p^2n-pr\frac{p^2-1}{8}+\frac{p^2-1}{8})+1$. Therefore using \eqref{4e13}, we obtain
		\begin{small}
			\begin{equation}\label{4e14}
				a_{2}\left(p^2n - pr\frac{p^2-1}{8} +\frac{p^2-1}{8}\right) \equiv a_{13}\left(104p^2n - 13 pr(p^2-1) + 13p^2-7\right) \equiv 0 \pmod2.
			\end{equation}
		\end{small}
		Since $\gcd(\frac{p^2-1}{8}, p)=1$ and $\gcd(p^2-1, p)=1,$ when $r$ runs over a residue system excluding the multiples of $p$, so do $ \frac{-r(p^2-1)}{8}$ and $-r(p^2-1)$. Thus for $j \not \equiv 0 \pmod p$, \eqref{4e14} can be written as
		\begin{small}
			\begin{equation*}
				a_{2}\left(p^2n +p j +\frac{p^2-1}{8}\right) \equiv 0 \pmod2
			\end{equation*}
		\end{small}
		and
		\begin{small}
			\begin{equation*}
				a_{13}\left(104p^2n+13 pj+ 13p^2-7\right) \equiv 0 \pmod2.
			\end{equation*}
		\end{small}
		This proves \eqref{4e8} and \eqref{4e10} in the case of $p\not\equiv 1 \pmod 8$.
		
		Next, substituting $n$ by $8pn+p$ in \eqref{4e2}, we obtain
		\begin{small}
			\begin{equation}\label{4e15}
				b(8p^2n+ p^2) = (-1) \left(\frac{-2}{p}\right) b\left(8n+1\right).
			\end{equation}
		\end{small}
		Note that $8p^2n+ p^2=8(p^2n+ \frac{p^2-1}{8})+1$. Therefore using \eqref{4e13} in \eqref{4e15}, we get
		\begin{small}
			\begin{equation*}
				a_{2}\left(p^2n+ \frac{p^2-1}{8}\right) \equiv (-1) \left(\frac{-2}{p}\right) a_2 (n) \pmod 2
			\end{equation*}
		\end{small}
		and
		\begin{small}
			\begin{equation*}
				a_{13}\left(104p^2n+ 13p^2-7\right) \equiv (-1)  \left(\frac{-2}{p}\right) a_{13} \left(104n+6\right) \pmod 2.
			\end{equation*}
		\end{small}
		which proves \eqref{4e9} and \eqref{4e11} in the case of $p\not\equiv 1 \pmod 8$.
		
		\noindent\textbf{Case 2: $p \equiv 1 \pmod 8.$}\\
		From \eqref{eq1}, we have
		\begin{small}
			\begin{equation}\label{4e16}
				\sum_{n=0}^{\infty} a_{2}(n)q^{n} \equiv \frac{(q^2;q^2)_{\infty}^2}{(q;q)_{\infty}} \equiv  (q;q)^3_{\infty} \pmod2.
			\end{equation}
		\end{small}
		From Lemma \ref{lem3}, we have
		\begin{small}
			\begin{equation}\label{4e17}
				\sum_{n=0}^{\infty} a_{13}(104n+6)q^{n} \equiv  (q;q)_{\infty}(q^2;q^2)_{\infty} \equiv (q;q)^3_{\infty} \pmod2.
			\end{equation}
		\end{small}
		Invoking \eqref{4e1}, \eqref{4e16} and \eqref{4e17}, we yield
		\begin{small}
			\begin{equation}\label{4e18}
				a_{2}(n)\equiv 	a_{13}(104n+6) \equiv c(n) \pmod 2.
			\end{equation}
		\end{small}
		If $p\nmid n$, then from \eqref{4e7} and \eqref{4e18}, we get
		\begin{small}
			\begin{align*}
				a_{2}\left(pn+\frac{p^2-1}{8}\right)
				\equiv a_{13}\left(104 pn+13p^2-7 \right) \equiv c\left(pn+\frac{p^2-1}{8}\right)
				\equiv 0\pmod 2.
			\end{align*}
		\end{small}
		Next replacing $n$ by $pn +j$ for $j\not \equiv 0 \pmod p$, we obtain
		\begin{small}
			\begin{align*}
				a_{2}\left(p^2n +p j +\frac{p^2-1}{8}\right)\equiv a_{13}\left(104p^2n +104p j +13p^2-7\right) \equiv 0\pmod 2.
			\end{align*}
		\end{small}
		which proves \eqref{4e8} and \eqref{4e10} in the case of $p\equiv 1 \pmod 8$.
		
		Next using \eqref{4e6} and \eqref{4e18}, we get
		\begin{small}
			\begin{align*}
				a_{2}\left(p^2n+\frac{p^2-1}{8}\right) &\equiv a_{13}\left(104p^2n+13p^2-7\right)\\
				&\equiv c\left(p^2n+\frac{p^2-1}{8}\right)\\
				&\equiv pc(n) \\
				&\equiv pa_{2}(n) \\
				& \equiv pa_{13}(104n+6) \pmod 2.
			\end{align*}
		\end{small}
		which proves \eqref{4e9} and \eqref{4e11} in the case of $p\equiv 1 \pmod 8$.
	\end{proof}
	
	\subsection{Proof of Theorem \ref{mainthm3}(i)} For $1\leq i \leq k-1$, we note that
	\begin{small}
		\begin{align*}
			p_{i}^{2}p_{i+1}^{2}\cdots p_k^{2}n +\frac{p_i^{2}p_{i+1}^{2}\cdots p_k^{2}-1}{8}=p_i^2\left(p_{i+1}^{2}\cdots p_k^{2}n +\frac{p_{i+1}^{2}\cdots p_k^{2}-1}{8}\right)+\frac{p_i^2-1}{8}
		\end{align*}
	\end{small}
	Thus for $1\leq i \leq k-1$, using \eqref{4e9} for $p=p_i$ we have
	\begin{small}
		\begin{align*}
			a_{2}\left(p_{i}^{2}p_{i+1}^{2}\cdots p_k^{2}n +\frac{p_i^{2}p_{i+1}^{2}\cdots p_k^{2}-1}{8}\right)\equiv \delta_{p_i}a_{2}\left(p_{i+1}^{2}\cdots p_k^{2}n +\frac{p_{i+1}^{2}\cdots p_k^{2}-1}{8}\right)\pmod 2.
		\end{align*}
	\end{small}
	Also from \eqref{4e9}, we have
	\begin{small}
		\begin{align*}
			a_{2}\left(p_k^{2}n +\frac{p_k^{2}-1}{8}\right)\equiv \delta_{p_k} a_{2} (n)\pmod 2.
		\end{align*}
	\end{small}
	Therefore from the congruences in the above two displays, we get
	\begin{small}
		\begin{align*}
			a_2\left(p_1^{2}p_2^{2}\cdots p_k^{2}n +\frac{p_1^{2}p_2^{2}\cdots p_k^{2}-1}{8}\right)
			&\equiv \delta_{p_1}\delta_{p_2} \cdots \delta_{p_k}a_2(n) \pmod 2.
		\end{align*}
	\end{small}
	Replacing  $n$ by $p_{k+1}^2n +\frac{p_{k+1} (8j + p_{k+1})-1}{8}$ in the above expression and then using \eqref{4e8} for $p=p_{k+1}$, we get
	\begin{small}
		\begin{align*}
			&a_2\left(p_1^{2}p_2^{2}\cdots p_k^{2}p_{k+1}^2n +\frac{p_1^{2}p_2^{2}\cdots p_k^{2}p_{k+1}(8 j+p_{k+1})-1}{8}\right)\\
			&\hspace{7cm}\equiv \delta_{p_1}\delta_{p_2} \cdots \delta_{p_k}a_2\left(p_{k+1}^2n +p_{k+1} j +\frac{p_{k+1}^2-1}{8}\right) \\
			&\hspace{7cm}\equiv 0\pmod 2.
		\end{align*}
	\end{small}
	when $j \not \equiv 0 \pmod{ p_{k+1}}$. This completes the proof of Theorem \ref{mainthm3}(i).

	\subsection{Proof of Theorem \ref{mainthm3}(ii)}
	
	The proof is similar to the proof of Theorem \ref{mainthm3}(i). For $1\leq i \leq k-1$, we note that
	\begin{small}
		\begin{align*}
			104p_{i}^{2}p_{i+1}^{2}\cdots p_k^{2}n +13p_i^{2}p_{i+1}^{2}\cdots p_k^{2}-7=104p_i^2\left(p_{i+1}^{2}\cdots p_k^{2}n +\frac{p_{i+1}^{2}\cdots p_k^{2}-1}{8}\right)+13p_i^2-7
		\end{align*}
	\end{small}
	Thus for $1\leq i \leq k-1$, \eqref{4e11} implies 
	\begin{small} 
		\begin{align*}
			&a_{13}(104p_i^{2}p_{i+1}^{2}\cdots p_k^{2}n +13p_i^{2}p_{i+1}^{2}\cdots p_k^{2}-7)\\
			&\hspace{5cm}\equiv \delta_{p_i} a_{13}\left( 104\left(p_{i+1}^{2}\cdots p_k^{2}n +\frac{p_{i+1}^{2}\cdots p_k^{2}-1}{8}\right)+6 \right)\\
			&\hspace{5cm}\equiv \delta_{p_i}a_{13}(104p_{i+1}^{2}\cdots p_k^{2}n+13p_{i+1}^{2}\cdots p_k^{2}-7)\pmod 2.
		\end{align*}
	\end{small}
	Also from \eqref{4e11}, we have
	\begin{small}
		\begin{align*}
			a_{13}\left(104p_k^{2}n +13p_k^{2}-7\right)\equiv \delta_{p_k} a_{13} (104n+6)\pmod 2
		\end{align*}
	\end{small}
	Therefore from the above two congruences, we get
	\begin{small}
		\begin{align*}
			a_{13}(104p_1^{2}p_2^{2}\cdots p_k^{2}n +13p_1^{2}p_2^{2}\cdots p_k^{2}-7)
			&\equiv \delta_{p_1}\delta_{p_2} \cdots \delta_{p_k}a_{13}(104n+6) \pmod 2
		\end{align*}
	\end{small}
	Replacing  $n$ by $p_{k+1}^2n +\frac{p_{k+1} (\epsilon_{p_{k+1}}j + p_{k+1})-1}{8}$ in the above expression and then using \eqref{4e10}, we get
	\begin{small}
		\begin{align*}
			&a_{13}(104p_1^{2}p_2^{2}\cdots p_k^{2}p_{k+1}^2n +13p_1^{2}p_2^{2}\cdots p_k^{2}p_{k+1}(\epsilon_{p_{k+1}} j+p_{k+1})-7)\\
			&\hspace{4cm}\equiv \delta_{p_1}\delta_{p_2} \cdots \delta_{p_k}a_{13}(104p_{k+1}^2n +13(p_{k+1} (\epsilon_{p_{k+1}}j + p_{k+1})-1)+6) \\
			&\hspace{4cm}\equiv \delta_{p_1}\delta_{p_2} \cdots \delta_{p_k}a_{13}(104p_{k+1}^2n +13(p_{k+1} (\epsilon_{p_{k+1}}j + p_{k+1}))-7) \\
			&\hspace{4cm}\equiv 0\pmod 2.
		\end{align*}
	\end{small}
	when $j \not \equiv 0 \pmod{ p_{k+1}}$. This completes the proof of Theorem \ref{mainthm3}(ii).

	\subsection{Proof of Theorem \ref{mainthm4}}
	For any prime $p \equiv 7 \pmod 8$, we get from \eqref{4e2} that 
	\begin{small}
		\begin{equation*}
			b(pn)= (-1) \left(\frac{-2}{p}\right) b\left(\frac{n}{p}\right).
		\end{equation*}
	\end{small}
	Let $r \not \equiv 0 \pmod p$. Replacing $n$ by $8(p^kn+r)+7$, we obtain
	\begin{small}
		\begin{equation*}
			b(8(p^{k+1}n+pr)+7p)= (-1)  \left(\frac{-2}{p}\right) b\left(\frac{8(p^kn+r)+7}{p}\right).
		\end{equation*}
	\end{small}
	which can be rewritten as
	\begin{small}
		\begin{equation}\label{4e19}
			b\left(8 \left(p^{k+1}n+pr+ \frac{7p-1}{8}\right)+1\right)= (-1) \left(\frac{-2}{p}\right) b\left(8\left(p^{k-1}n+ \frac{8r+7-p}{8p}\right)+1\right).
		\end{equation}
	\end{small}
	We note here that $\frac{7p-1}{8} $ and $\frac{8r+7-p}{8p}$ are integers. Therefore using \eqref{4e13} and \eqref{4e19}, we get
	\begin{small}
		\begin{equation}\label{4e20}
			a_{2}\left(p^{k+1}n+pr+ \frac{7p-1}{8}\right)\equiv  (-1) \left(\frac{-2}{p}\right) a_{2}\left(p^{k-1}n+ \frac{8r+7-p}{8p}\right) \pmod2.
		\end{equation}
	\end{small}
	and
	\begin{small}
		\begin{equation}\label{4e21}
			a_{13}\left(104 p^{k+1}n+104pr+ 91p-7\right)\equiv  (-1) \left(\frac{-2}{p}\right) a_{13}\left(104p^{k-1}n+ \frac{104r+91}{p}-7\right) \pmod2.
		\end{equation}
	\end{small}
	
	\subsection{Proof of Corollary \ref{coro4}}  	Let $p$ be a prime such that $p \equiv 7 \pmod 8.$ Choose a non negative integer $r$ such that $8r+7=p^{2k-1}.$ Substituting $ k$ by $2k-1$ in \eqref{4e20}, we obtain
	\begin{small}
		\begin{align*}
			a_{2}\left( p^{2k}n+ \frac{p^{2k}-1}{8}\right)&\equiv  (-1) \left(\frac{-2}{p}\right)  a_{2}\left(p^{2k-2}n+ \frac{p^{2k-2}-1}{8} \right)\\
			& \equiv \cdots \equiv \left(-1\right)^k \left(\frac{-2}{p}\right)^k a_{2}(n) \pmod2.
		\end{align*}
	\end{small}
	Substituting $ k$ by $2k-1$ in \eqref{4e21}, we obtain
	\begin{small}
		\begin{align*}
			a_{13}\left(104 p^{2k}n+ 13p^{2k}-7\right)&\equiv  (-1) \left(\frac{-2}{p}\right)  a_{13}\left(104p^{2k-2}n+ 13p^{2k-2}-7 \right)\\
			& \equiv \cdots \equiv \left(-1\right)^k \left(\frac{-2}{p}\right)^k a_{13}(104n+6) \pmod2.
		\end{align*}
	\end{small}

	


\begin{thebibliography}{99}
 	\bibitem {Baruah2014} N. D. Baruah, \emph{Some results on $3$-cores}, Proc. Amer. Math. Soc. \textbf{142} (2014), 441-448.
		
		\bibitem{Berndt1991} B.C. Berndt, \emph{Ramanujan's Notebooks, Part III.}, Springer-Verlag, New York (1991).
		
		\bibitem{Das2016} K. Das, \emph{Parity results for $13$-core partitions}, Mat. Vesnik \textbf{68} (2016), 175-181.
		
		\bibitem{Garvan1990} F. Garvan, D. Kim and D. Stanton, \emph{Cranks and $t$-cores}, Invent. Math. \textbf{101} (1990), 1-17.
		
		\bibitem{Garvan2001} F. Garvan, D. Kim and D. Stanton, \emph{More cranks and $t$-cores}, Bull. Aust. Math. Soc. \textbf{63} (2001), 379--391.
		
		\bibitem{Graville1996} A. Granville and K. Ono, \emph{Defect zero $p$-blocks for finite simple groups}, Trans. Amer. Math. Soc. \textbf{348} (1996), no. 1, 331-347.
		
		\bibitem{Hirs2019} M. D. Hirschhorn and J. A. Sellers, \emph{Parity results for partitions wherein each parts an odd number of times}, Bull. Aust. Math. Soc. \textbf{1} (2019), 51-55.
		
		
		\bibitem{Koblitz} N. Koblitz, \emph{Introduction to elliptic curves and modular forms}, Springer-Verlag New York (1991).
		
		\bibitem{Martin1996} Y. Martin, \emph{Multiplicative $\eta$-quotients}, Trans. Am. Math. Soc. \textbf{348} (1996), 4825-4856.
		
		\bibitem{MeherJindal2022} N. K. Meher and A. Jindal, \emph{Arithmetic density and congruences for $t$-core partitions}, Preprint.
		
		\bibitem{Ono1996} K. Ono, \emph{Parity of the partitions function in arithmetic progressions}, J. Reine Angew. Math. \textbf{472} (1996), 1-15.
		
		\bibitem{Ono2004} K. Ono, \emph{The web of modularity: arithmetic of the coefficients of modular forms and $q-$series}, CBMS Regional Conference Series in Mathematics, $102,$ Amer. Math. Soc., Providence, RI, 2004.
		

		\bibitem{Ramanujan2000} S. Ramanujan, \emph{Collected papers}, Cambridge Univ. Press, Cambridge, UK, 1927, reprinted by Chelsea, New York, 1961, reprinted by the Amer. Math. Soc., Providence, RI, 2000.
		
		\bibitem{Ray2022} C. Ray and K. Chakraborty, \emph{ Certain eta-quotients and $\ell$-regular overpartitions}, Ramanujan J. \textbf{57} (2020), 453-470.
		
	\bibitem{Tate1994} J. Tate, \emph{Extensions of $\mathbb{Q}$ unramified outsoide $2$}, in: Arithmetic Geometry: Conference on Arithmetic Geometry with an Emphasis on Iwasawa Theory, Arizona State University, March $15-18,$ 1993, Vol. 174, No. 174, American Mathematical Society, Providence, 1994.
		
	\end{thebibliography}
\end{document}